\providecommand{\U}[1]{\protect\rule{.1in}{.1in}}
\newtheorem{theorem}{Theorem}[section]
\newtheorem{proposition}[theorem]{Proposition}
\newtheorem{remark}[theorem]{Remark}
\numberwithin{equation}{section}
\def\fps@figure{htbp} 
\def\fnum@figure{\textbf{Figure. \thefigure}} 
\begin{document}

\title[Even Cone Spherical Metrics: Blow-Up at a Cone Singularity]{Even Cone Spherical Metrics: Blow-Up at a Cone Singularity.}

\author{Ting-Jung Kuo}
\address[Ting-Jung Kuo]{Department of mathematics, National Taiwan Normal University, Taipei, 11677, Taiwan \& National Center for Theoretical Sciences, No.1 Sec.4 Roosevelt Rd., National Taiwan University, Taipei 10617, Taiwan.}
\email{tjkuo1215@ntnu.edu.tw, tjkuo1215@gmail.com}

\author{Xuanpu Liang}
\address[Xuanpu Liang]{Laboratory of Mathematics and Complex Systems (Ministry of Education), School of Mathematical Sciences, Beijing Normal University, Beijing 100875, China.}
\email{xuanpuliang@mail.bnu.edu.cn}

\author{Ping-Hsiang Wu}
\address[Ping-Hsiang Wu]{Department of Mathematics, National Taiwan Normal University, Taipei 11677, Taiwan.}
\email{phsiangwu@gmail.com}

\begin{abstract}
We study families of spherical metrics on the flat torus $E_{\tau}$ $=$
$\mathbb{C}/\Lambda_{\tau}$ with blow-up behavior at prescribed conical
singularities at $0$ and $\pm p$, where the cone angle at $0$ is $6\pi$, and
at $\pm p$ is $4\pi$. We prove that the existence of such a necessarily
unique, even family of spherical metrics is completely determined by the
geometry of the torus: such a family exists if and only if\textbf{ }the Green
function $G(z;\tau)$ admits a pair of nontrivial critical points $\pm a$. In
this case, the cone point $p$ must equal $a$, and the corresponding monodromy
data is $\left(  2r,2s\right)  $, where $a=r+s\tau.$

An explicit transformation relating this family to the one with a single
conical singularity of angle $6\pi$ at the origin is established in Theorem
\ref{Main theorem4}. A rigidity result for rhombic tori is proved in Theorem
\ref{Main theorem3}.

\end{abstract}
\maketitle
\tableofcontents


\section{Introduction}

\label{Introduction}

Let $E_{\tau}:=\mathbb{C}/\Lambda_{\tau}$, where $\Lambda_{\tau}$ $=$
$\mathbb{Z}\oplus\tau\mathbb{Z}$ and $\tau\in$ $\mathbb{H}$ $=$ $\left\{
\tau|\operatorname{Im}\tau>0\right\}  $, denote the flat torus. The study of
cone spherical metrics on $E_{\tau}$, i.e. metrics of constant Gaussian
curvature $1$ with prescribed conical singularities and angles, has been a
central problem in conformal geometry. Significant progress has been made in
this direction; see \cite{CLW, CCChen-Lin 1,Chen-Lin-sharp
nonexistence,Chen-Kuo-Lin-8pi+8pi, Bergweiler-Eremenko-dynamics,
Chen-Fu-Lin-Hitchin, Chen-Kuo-Lin-16pi, CWWXu, Eremenko-.Gabrielov-on metrics,
Eremenko-Mondello-Panov, EGMP, LW-AnnMath, LW, LSXu}.

Let $p\in E_{\tau}$. Throughout this paper, we assume
\begin{equation}
p\in E_{\tau}\setminus E_{\tau}[2], \label{Assumption 1}%
\end{equation}
where $E_{\tau}[2]$ denotes the 2-torsion points on $E_{\tau}$. That is
$E_{\tau}[2]$ $=$ $\{\frac{\omega_{k}}{2},k$ $=$ $0,1,2,3.\}$ Here $\omega
_{0}=0,\omega_{1}=1,\omega_{2}=\tau,\omega_{3}=1+\tau.$

We investigate the following curvature equation:%
\begin{equation}
\Delta u+e^{u}=8\pi\delta_{0}+4\pi(\delta_{p}+\delta_{-p})\text{ on }E_{\tau
}\text{,} \label{Curvature 1}%
\end{equation}
where $\delta_{q}$ denotes the Dirac measure at $q\in E_{\tau}$. Any solution
$u$ gives rise to the conformal metric $\frac{1}{2}e^{u}dz^{2}$, which has
constant positive curvature $1$ on the elliptic curve $E_{\tau}$, with a
conical singularity of cone angle $6\pi$ at $0$, and conical singularities of
cone angle $4\pi$ at $\pm p$.

Physically, equation \eqref{Curvature 1}$_{\tau,p}$ appears in statistical mechanics as
the mean-field equation for the Euler flow in Onsager's vortex model
\cite{CLMP}, and it is also connected to the study of self-dual condensates in
the Chern--Simons--Higgs model of superconductivity\ \cite{LY, NT}.

The curvature equation \eqref{Curvature 1}$_{\tau,p}$ is integrable equation in the sense
of the classical Liouville Theorem, which asserts that any solution $u(z)$ of
equation \eqref{Curvature 1}$_{\tau,p}$ is given as the form
\begin{equation}
u(z)=\log\frac{8|f^{\prime}(z)|^{2}}{(1+|f(z)|^{2})^{2}}. \label{502}%
\end{equation}
See \cite{CLW, Prajapat-Tarantello} for a proof. Here, $f(z)$ is locally
meromorphic and is commonly referred to as the developing map in the literature.

When the total cone angle is an odd multiple of $2\pi$, the developing map
$f(z)$ associated with a solution $u(z)$ must satisfy type II condition:%
\begin{equation}
f(z+1)=e^{-4\pi is}f(z)\text{ and }f(z+\tau)=e^{4\pi ir}f(z) \label{type II}%
\end{equation}
for some real pair $\left(  r,s\right)  \in\mathbb{R}^{2}$; see \cite{CLW} for
a proof. Under this condition (\ref{type II}), any solution $u(z)$ to the
curvature equation \eqref{Curvature 1}$_{\tau,p}$ gives rise to a one-parameter family of
solutions
\begin{equation}
u_{\beta}(z)=\log\frac{8e^{2\beta}\left\vert f^{\prime}\right\vert ^{2}%
}{\left(  1+e^{2\beta}\left\vert f\right\vert ^{2}\right)  ^{2}},\text{ }%
\beta\in\mathbb{R}\text{,} \label{family}%
\end{equation}
which corresponds to replacing $f$ by $f_{\beta}:=e^{\beta}f$. The real pair
$\left(  r,s\right)  $ is referred to as the \textbf{monodromy data }of the
family $\left\{  u_{\beta}(z;\tau,p)|\beta\in\mathbb{R}\right\}  $. When
necessary, we denote the family as $\left\{  u_{\beta}(z;\tau,p)\right\}  $ or
$\left\{  u_{\beta}(z;\tau,p)\right\}  _{(r,s)}$ to emphasize the underlying monodromy.

We say that the family is an \textbf{even} \textbf{family} if it contains an
even solution, which is necessarily unique. Equivalently, there exists $\beta$
such that $f_{\beta}(-z)$ $=$ $\frac{1}{f_{\beta}(z)}$. Otherwise, it is said
to be an \textbf{non-even family}.

The parameter $\beta$ $\in$ $\mathbb{R}$ serves as a scaling parameter along a
one-parameter family in the moduli space, under which the geometry of the
solution degenerates into delta-mass configurations characterized by curvature
concentration. Specifically, the family $\left\{  u_{\beta}(z;\tau,p)\right\}
$ exhibits blow-up behavior: as $\beta\rightarrow+\infty$ , blow-up occurs at
the zeros of $f$, and as $\beta\rightarrow-\infty$, at its poles.

In our previous work \cite[Corollary 3.10]{KLW}, we proved that when $E_{\tau
}$ is the \textbf{rhombic torus}, i.e., $\tau=\rho=e^{\pi i/3}$, the curvature
equation \eqref{Curvature 1}$_{\tau,p}$ with%
\[
\tau=\rho=e^{\pi i/3}\text{ and }p=\frac{1+\rho}{3}%
\]
admits a \textbf{unique even family} of solutions
\begin{equation}
\left\{  \mathbf{u}_{\beta}(z;\rho,\frac{1+\rho}{3})\right\}  \label{example}%
\end{equation}
which exhibit blow-up behavior at the cone singularities
\[
\pm\frac{1+\rho}{3}\text{ mod }\Lambda_{\rho}%
\]
as $\beta\rightarrow\pm\infty$, respectively.

Mathematically, the blow-up of a cone spherical metric at a cone point
corresponds to a concentration of curvature that disrupts the regular
geometric structure, as the metric diverges to infinity at the singularity.

A natural question arises:

	\textbf{Question }: \textit{For which pairs }$(\tau,p),$\textit{ with }%
	$\tau\in\mathbb{H}$\textit{ and }$p\in E_{\tau},$\textit{ satisfying
		(\ref{Assumption 1}), does the the curvature equation (\ref{Curvature 1}%
		)}$_{\tau,p}$\textit{ admit an even family of solutions $\{\textbf{u}_{\beta}(z;\tau,p)\}$ that blow up behavior
		at the cone singularities }$\pm p$\textit{ as }$\beta\rightarrow\pm\infty
	$\textit{, respectively?}
	
	Since each $\textbf{u}_{\beta}$ exhibits a conical singularity at $p$, specifically,
	\[
	\textbf{u}_{\beta}(z)=2\log|z-p|+O(1),
	\]
    where the bounded term $O(1)$ depends on $\beta$, the blow-up at this point is to be understood as occurring at $p$ fter the conical singularity has been removed through the corresponding Green function. That is,
	\[
	\textbf{u}_{\beta}+4\pi\, G(z-p;\tau)
	\]
	blows up at $p$ as $\beta\to +\infty$.

To address the above question , we first introduce the Green function
$G(z;\tau)$ on $E_{\tau}$, defined by
\[
-\Delta G(z;\tau)=\delta_{0}-\frac{1}{\left\vert E_{\tau}\right\vert }\text{,
}\int_{E_{\tau}}G(z;\tau)=0,
\]
where $\left\vert E_{\tau}\right\vert $ denotes the area of the torus. The
function $G(z;\tau)$ is even and has a unique singularity at $z=0$ mod
$\Lambda_{\tau}$. By symmetry, it follows that
\[
\nabla G(\frac{\omega_{k}}{2};\tau)=0,k=1,2,3.
\]
That is, each half-period $\frac{\omega_{k}}{2}$ is always a critical point of
$G(z;\tau)$, and these are referred to as the \textbf{trivial critical
points}. As shown in \cite{LW-AnnMath}, for any torus $E_{\tau}$, the Green
function $G(z;\tau)$ either has exactly three trivial critical points, or it
additionally admits a pair of \textbf{nontrivial critical points} $\pm a\in
E_{\tau}$, where
\[
a=r+s\tau,\text{ }\left(  r,s\right)  \in\mathbb{[}0,1\mathbb{]}^{2}%
\setminus\frac{1}{2}\mathbb{Z}^{2}.
\]

Since nontrivial critical points always appear as a pair $\pm a$, we may,
without loss of generality, assume%
\begin{equation}
a=r+s\tau\text{, }\left(  r,s\right)  \in\lbrack0,1/2]\times\lbrack
0,1]\setminus\frac{1}{2}\mathbb{Z}^{2}\text{.}
\label{nontrivial critical point}%
\end{equation}
The existence of nontrivial critical points $\pm\sigma$ in the Green function
corresponds to flat tori that admit a necessarily unique, even family of
solutions $\{v_{\beta}(z;\tau)\}$ to the following curvature equation:%

\begin{equation}
\Delta v+e^{v}=8\pi\delta_{0}\text{ on }E_{\tau}. \label{Curvature single}%
\end{equation}
This family has monodromy data $\left(  r,s\right)  $ and exhibits blow-up
behavior at $\pm a$ \textit{as }$\beta\rightarrow\pm\infty$,
respectively,\ where $a=r+s\tau$; see \cite{LW-AnnMath}.

We are now ready to state our main result:

\begin{theorem}
\label{Main theorem1} Let $\tau\in\mathbb{H}$. Then there exists $p\in
E_{\tau}\setminus E_{\tau}[2]$ such that the curvature equation
\textit{(\ref{Curvature 1})}$_{\tau,p}$ admits an even family of solutions
$\{\mathbf{u}_{\beta}(z;\tau,p)\}$, blowing up at the cone singularities $\pm
p$\textit{ as }$\beta\rightarrow\pm\infty$, respectively,\textit{ if and only
if the Green function }$G(z;\tau)$ admits a pair of nontrivial critical point $\pm a$. Moreover, in this case, the even family is unique, and $p=a$. Equivalently,%
\[
\left\{  \mathbf{u}_{\beta}(z;\tau,p)\right\}  =\left\{  \mathbf{u}_{\beta
}(z;\tau,a)\right\}.
\]
\end{theorem}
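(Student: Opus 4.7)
The plan is to reduce the existence question to an algebraic condition on the developing map $f$ of an even solution, and to identify this condition as precisely the critical point equation $\partial_z G(p;\tau)=0$. First I would analyze the local structure of $f$ in the normalization $f(-z)=1/f(z)$. The hypothesis that blow-up occurs at $\pm p$ but not at $0$, combined with the prescribed cone angles, pins down the divisor of $f$: a zero of order $2$ at $+p$ (where all mass concentrates as $\beta\to +\infty$, matching the cone angle $4\pi$), a pole of order $2$ at $-p$ (forced by evenness), and $f(0)=\pm 1$ with $f'$ vanishing to order $2$ at $0$ (producing the cone angle $6\pi$ without concentration). This yields the explicit representation
\[
f(z)\;=\;C\,e^{\alpha z}\,\frac{\theta_1(z-p;\tau)^2}{\theta_1(z+p;\tau)^2},\qquad C=\pm 1,
\]
and imposing $f'(0)=0$ forces $\alpha = 4\,\theta_1'(p;\tau)/\theta_1(p;\tau)$, while the further requirement $f''(0)=0$ is automatic from the evenness of $\wp$, so the cone angle at $0$ is exactly $6\pi$.

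Second, I would impose the type II monodromy \eqref{type II}. The quasi-periodicity of $\theta_1$ gives
\[
f(z+1)=e^{\alpha}f(z),\qquad f(z+\tau)=e^{\alpha\tau+8\pi i p}f(z),
\]
so type II requires both multipliers to have unit modulus, which is two real equations on $p$. Writing $\zeta(p):=\theta_1'(p;\tau)/\theta_1(p;\tau)$ and using $\alpha=4\zeta(p)$, these two equations collapse into the single complex equation
\[
\zeta(p)\;=\;-\,\frac{2\pi i\,\mathrm{Im}\,p}{\mathrm{Im}\,\tau},
\]
which is exactly $\partial_z G(p;\tau)=0$. Since $p\notin E_\tau[2]$ excludes the trivial critical points, $p=\pm a$ must be a nontrivial critical point of $G$; evenness allows the choice $p=a$, and the multipliers evaluated at $p=a=r+s\tau$ then read off the monodromy as $(2r,2s)$.

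Conversely, given a nontrivial critical point $a$ of $G(z;\tau)$, taking $p=a$ and $\alpha=4\zeta(a)$ in the formula above produces a developing map with type II monodromy by construction; the induced family $\{\mathbf{u}_\beta\}$ under $f\mapsto e^\beta f$ is then even at $\beta=0$ and blows up at $\pm a$ as $\beta\to\pm\infty$. Uniqueness of the family follows because the only remaining freedom is the sign $C=\pm 1$, which has no effect on $\mathbf{u}_\beta$. The main obstacle I expect is the collapse carried out in the second step: verifying that the two modulus-one conditions really do combine into the single Green-function equation, and confirming that, under the convention $(r,s)\in[0,1/2]\times[0,1]\setminus\tfrac12\mathbb{Z}^2$ from \eqref{nontrivial critical point}, the multipliers extracted from the developing map correspond exactly to monodromy $(2r,2s)$ and the blow-up locus is the point $a$ itself rather than some other preimage in $E_\tau$.
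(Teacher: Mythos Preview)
Your approach is correct and arrives at exactly the same developing map
\[
f(z)=e^{4\zeta(p)z}\,\frac{\sigma(z-p)^2}{\sigma(z+p)^2}
\]
and the same critical-point condition $Z(p;\tau)=0$ as the paper, but by a genuinely different and more elementary route. The paper does not work directly with the divisor of $f$; instead it passes through the associated generalized Lam\'e equation \eqref{GLE1,even}$_{\tau,p,A}$, shows (Theorem~\ref{a1,a2 alge equ}(ii)) that the blow-up hypothesis forces both zeros of the Baker--Akhiezer function to collapse to $p$, hence $A=A_0=\tfrac{3\wp''(p)}{4\wp'(p)}$, and then extracts the system \eqref{r0,s0,2p,relation} from the quasi-periods of $\psi(P;z)$. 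Your argument bypasses the ODE and spectral-curve machinery entirely: you read off the divisor of $f$ from the cone angles and the blow-up location, write $f$ directly in sigma (or theta) form, and impose the type~II condition by hand. The collapse of the two unitarity constraints to the single equation $\partial_zG(p;\tau)=0$ that you flag as the ``main obstacle'' is indeed a clean two-line computation with the Legendre relation, as you can verify.

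What your approach buys is brevity and transparency for Theorem~\ref{Main theorem1} in isolation; what the paper's approach buys is the structural link to the Lam\'e equation, the spectral polynomial $Q(A)$, and the Baker--Akhiezer formalism, which are then reused in the proofs of Theorems~\ref{Main theorem2}--\ref{Main theorem3} and connect the result to the broader Painlev\'e~VI and finite-gap framework. Two small points worth tightening in your write-up: (i) make explicit that ``blow-up at $\pm p$'' is being read as ``the blow-up set is exactly $\{\pm p\}$'', since this is what rules out extra zeros or poles of $f$ away from the cone points and pins the divisor down; (ii) your notation $\zeta(p):=\theta_1'(p)/\theta_1(p)$ differs from the paper's Weierstrass $\zeta$ by $\eta_1 p$, so when you translate your final equation $\zeta(p)=-2\pi i\,\mathrm{Im}\,p/\mathrm{Im}\,\tau$ back into the paper's language you should check it becomes $\zeta_W(p)=r\eta_1+s\eta_2$, i.e.\ $Z(r,s,\tau)=0$, which it does.
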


\begin{remark}
Surprisingly, Theorem \ref{Main theorem1} reveals that the existence of such
an even blow-up family is governed entirely by the intrinsic geometry of the
torus. Specifically, the moduli parameter $\tau$ and the cone singularity
positions $\pm p$ are completely determined by the structure of the Green
function $G(z;\tau)$ on the torus.
\end{remark}

\textbf{Assume} $G(z;\tau)$ \textbf{admits a pair of nontrivial critical
points.} According to Theorem \ref{Main theorem1}, these two even families
$\{\mathbf{u}_{\beta}(z;\tau,a)\}$ and $\left\{  v_{\beta}(z;\tau)\right\}  $
both exhibit the \textbf{same} blow-up behavior at $\pm a$ \textit{as }%
$\beta\rightarrow\pm\infty$, respectively. However, they correspond to two
distinct families of cone spherical metrics on $E_{\tau}$: the former has cone
angles $6\pi$ at $0$ and $4\pi$ at $\pm a$, whereas the latter has a single
conical singularity of angle $6\pi$ at $0$ only.

This naturally raises the question:\textbf{ What is the monodromy data of the
even family} $\left\{  \mathbf{u}_{\beta}(z,\tau,a)\right\}  $
\textbf{obtained in Theorem \ref{Main theorem1}}? Our second main result
provides the answer.

\begin{theorem}
\label{Main theorem2}Assume $G(z;\tau)$ admits a pair of nontrivial critical
points $\pm a$ satisfying (\ref{nontrivial critical point}). Then the
monodromy data of the unique even family $\left\{  \mathbf{u}_{\beta}%
(z;\tau,a)\right\}  $ obtained in Theorem \ref{Main theorem1}, for the
curvature equation (\ref{Curvature single}) is precisely $\left(
2r,2s\right)  $ mode $\mathbb{Z}^{2}$.
\end{theorem}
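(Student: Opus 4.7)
The plan is to reduce Theorem \ref{Main theorem2} to the known monodromy computation for the single-cone-point equation \eqref{Curvature single} by showing that the developing map of the even family $\{\mathbf{u}_\beta(z;\tau,a)\}$ is proportional to the square of the developing map of $\{v_\beta(z;\tau)\}$.

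Let $f$ be a developing map for the even member of $\{\mathbf{u}_\beta(z;\tau,a)\}$, normalized so that $f(-z)=1/f(z)$. From the prescribed blow-up behavior at $\pm a$ and the cone angle $4\pi$ there, direct analysis of the Liouville formula \eqref{502} shows that $f$ has a zero of order $2$ at $+a$ and a pole of order $2$ at $-a$; the cone angle $6\pi$ at $0$ combined with $f(-z)=1/f(z)$ forces $f(0)=\pm 1$ with ramification of order $3$ at $0$. Similarly, by \cite{LW-AnnMath}, the even developing map $g$ of $\{v_\beta(z;\tau)\}$ satisfies $g(-z)=1/g(z)$, has a simple zero at $+a$, a simple pole at $-a$, takes value $g(0)=\pm 1$ with ramification of order $3$ at $0$, and has monodromy data $(r,s)$.

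The core step is to verify that $h(z):=g(z)^2$ yields an even solution of \eqref{Curvature 1}$_{\tau,a}$. The evenness $h(-z)=1/h(z)$ is immediate. Squaring doubles the multiplicities at $\pm a$, so $h$ has a double zero at $+a$ and a double pole at $-a$, matching cone angle $4\pi$. Near $z=0$, the Taylor expansion $g(z)=g(0)+cz^3+O(z^4)$ gives
\[
h(z) = g(0)^2 + 2g(0)\,c\,z^3 + O(z^4),
\]
so $h$ retains ramification of order $3$ at $0$, matching cone angle $6\pi$. Therefore
\[
w(z) := \log \frac{8\,|h'(z)|^2}{(1+|h(z)|^2)^2}
\]
is an even solution of \eqref{Curvature 1}$_{\tau,a}$; by the uniqueness in Theorem \ref{Main theorem1}, $w$ belongs to $\{\mathbf{u}_\beta(z;\tau,a)\}$, say $w=\mathbf{u}_{\beta_0}$.

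The developing maps $h$ and $f_{\beta_0}:=e^{\beta_0}f$ now induce the same metric and share the same divisor on $E_\tau$, so their ratio $f_{\beta_0}/h$ is a nowhere-vanishing entire function of the form $e^{Az+B}$. Both $f_{\beta_0}$ and $h$ are type II quasi-periodic with modulus-one monodromy factors, hence so is $e^{Az+B}$; this forces $|e^A|=|e^{A\tau}|=1$, and since $\tau\notin\mathbb{R}$ we obtain $A=0$, so $f_{\beta_0}=Ch$ for a nonzero constant $C$. Consequently $f$ and $h$ have identical monodromy factors, and squaring the monodromy of $g$ yields
\[
h(z+1) = e^{-4\pi i(2s)}\,h(z), \qquad h(z+\tau) = e^{4\pi i(2r)}\,h(z),
\]
so the monodromy data of $\{\mathbf{u}_\beta(z;\tau,a)\}$ is $(2r,2s) \bmod \mathbb{Z}^2$.

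The main obstacle is the precise identification of the divisor of $f$ (and of $g$) purely from the blow-up behavior: one must rule out spurious zeros or poles that would invalidate the identity $f_{\beta_0}=Ch$. This rests on a Liouville-type area/degree count: Gauss--Bonnet gives total area $8\pi$ for \eqref{Curvature 1}$_{\tau,a}$ and $4\pi$ for \eqref{Curvature single}, forcing the generic preimage counts of $f$ and $g$ to be $2$ and $1$ respectively, which, combined with the prescribed ramification indices $3,2,2$ at $0,\pm a$, leaves no room for additional zeros or poles. Once the divisor structure of $g$ from \cite{LW-AnnMath} is invoked, the chain $f = Cg^{2}$ together with the monodromy-squaring identity completes the argument.
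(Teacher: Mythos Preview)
Your proof is correct and takes a genuinely different route from the paper. The paper computes the monodromy data $(r_0,s_0)$ of the associated generalized Lam\'e equation \eqref{GLE1,even}$_{\tau,a,A_0}$ at the distinguished parameter $A_0=\tfrac{3\wp''(a)}{4\wp'(a)}$ via the explicit sigma-function form of the Baker--Akhiezer function, obtaining the system $r_0+s_0\tau=2a$, $r_0\eta_1+s_0\eta_2=2\zeta(a)$; the critical-point condition $Z(r,s,\tau)=0$ then forces $(r_0,s_0)=(2r,2s)$. You instead bypass the ODE machinery entirely by squaring the known developing map $g$ from \cite{LW-AnnMath}, checking directly that $h=g^{2}$ produces the correct cone-angle profile at $0,\pm a$, and invoking the uniqueness clause of Theorem~\ref{Main theorem1}. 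In effect you prove Theorem~\ref{Main theorem4} first and read off Theorem~\ref{Main theorem2} as a corollary, whereas the paper proceeds in the opposite order, deriving Theorem~\ref{Main theorem4} from the explicit formulas obtained via the spectral theory. Your argument is more conceptual and self-contained, at the cost of leaning on the uniqueness in Theorem~\ref{Main theorem1} (which the paper itself establishes through that spectral machinery); since Theorem~\ref{Main theorem2} presupposes Theorem~\ref{Main theorem1}, this is legitimate.

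One remark: the paragraph where you compare $f_{\beta_0}$ and $h$ divisor-by-divisor is unnecessary and its justification is the most delicate part of your write-up. Once you have shown that $h=g^{2}$ is a type~II developing map for some member of $\{\mathbf{u}_\beta(z;\tau,a)\}$, the monodromy data of the family is already determined by $h$: any two type~II developing maps of the same solution are related by $\tilde f=\lambda f$ or $\tilde f=\lambda/f$ with $|\lambda|=1$, both of which preserve the data modulo sign and $\mathbb{Z}^{2}$. Your area/degree count does correctly pin down the divisor of $f$ (degree~$2$, with the double zero at $a$ already exhausting the zero count), but you can simply drop that step.
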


Since both $\{\mathbf{u}_{\beta}(z;\tau,a)\}$ and $\left\{  v_{\beta}%
(z;\tau)\right\}  $ are even families, without loss of generality, we may
denote by $\mathbf{u}_{0}(z;\tau,a)$ and $v_{0}(z;\tau)$ the unique even
solutions within each family. Let $f(z;\tau,a)$ and $h(z;\tau)$ be their
respective type II developing maps, so that
\begin{equation}
\mathbf{u}_{0}(z;\tau,a)=\log\frac{8|f^{\prime}(z;\tau,a)|^{2}}{(1+|f(z;\tau
,a)|^{2})^{2}}\text{, }v_{0}(z;\tau)=\log\frac{8|h^{\prime}(z;\tau)|^{2}%
}{(1+|h(z;\tau)|^{2})^{2}}. \label{type II developing map}%
\end{equation}
Our third main theorem establishes the precise transformation relating these
two even families.

\begin{theorem}
\label{Main theorem4} Assume that the Green function $G(z;\tau)$ admits a pair of nontrivial critical points $\pm a$ satisfying \eqref{nontrivial critical point}. Let
$\mathbf{u}_{0}(z;\tau,a)$ and $v_{0}(z;\tau)$ denote the unique even
solutions in the families $\{\mathbf{u}_{\beta}(z;\tau,a)\}$ and $\left\{
v_{\beta}(z;\tau)\right\}$, respectively, as obtained in Theorem
\ref{Main theorem1} and the single-source equation (\ref{Curvature single}).
Suppose $f(z;\tau,a)$ and $h(z;\tau)$ are their type II developing maps as in
(\ref{type II developing map}). Then
\[
f(z;\tau,a)=h(z;\tau)^{2}.
\]
This identity establishes an explicit correspondence between the two even families. 
\end{theorem}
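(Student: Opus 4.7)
The plan is to show that the ratio $F(z):=f(z;\tau,a)/h(z;\tau)^2$ descends to a nonzero constant on $E_\tau$, and then to pin down its value by evaluating at $z=0$.

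First, I would verify that $F$ is $\Lambda_\tau$-periodic. Squaring the monodromy of $h$ gives $h^2(z+1)=e^{-8\pi is}h^2(z)$ and $h^2(z+\tau)=e^{8\pi ir}h^2(z)$, and by Theorem \ref{Main theorem2} the function $f$ carries exactly the same type-II monodromy $(2r,2s)$. Hence $F$ is invariant under $\Lambda_\tau$.

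Next, I would determine the divisors of $f$ and $h^2$ on $E_\tau$ and show they agree. For $h$: the evenness $h(-z)=1/h(z)$ forces $h(0)^2=1$, so $h$ is regular and non-vanishing at $z=0$; combined with $v_0\sim 4\log|z|$ near $0$ and the Liouville formula (\ref{type II developing map}), this yields $h'$ vanishing to order $2$ at $0$ and $h$ non-critical elsewhere. The prescribed blow-up behavior of $\{v_\beta(z;\tau)\}$ at $\pm a$ as $\beta\to\pm\infty$, together with the fact that the divisor of a section of a flat line bundle on $E_\tau$ has degree zero, pins the zeros and poles of $h$ down to a simple zero at $a$ and a simple pole at $-a$. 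Hence $\operatorname{div}(h^2)=2[a]-2[-a]$ on $E_\tau$. Applying the analogous analysis to $f$ — using the additional cone conditions $\mathbf{u}_0\sim 2\log|z\mp a|$ at $\pm a$ from (\ref{Curvature 1})$_{\tau,a}$ and again the blow-up and monodromy constraints — one obtains $\operatorname{div}(f)=2[a]-2[-a]$ as well.

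Consequently the divisor of $F$ on $E_\tau$ vanishes: $F$ is holomorphic, nowhere zero, and doubly periodic, hence a nonzero constant. Normalizing the developing maps by $h(0)=1$ and $f(0)=1$ — which is permitted because, among the Möbius transformations preserving both the type-II form and the evenness $g(-z)=1/g(z)$, the only freedom reduces to $g\mapsto -g$, and $h(0),f(0)\in\{\pm 1\}$ by evenness — gives $F(0)=1$, so $f=h^2$.

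The main obstacle is the divisor bookkeeping in the second step, specifically certifying that $h$ has \emph{no} zeros or poles beyond the prescribed pair $\pm a$ and that each has multiplicity one. This requires the single-cone analysis from \cite{LW-AnnMath} (an explicit representation of the even developing map $h$ via Weierstrass $\sigma$-functions and identification of its divisor), combined with the degree-zero constraint on sections of flat line bundles on $E_\tau$. Once that structural input is in place, the remainder — monodromy squaring, divisor matching, and the normalization at the origin — is essentially routine.
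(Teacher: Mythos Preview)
Your approach is correct but takes a different route from the paper. The paper's proof is a two-line direct computation: it quotes the explicit formula
\[
f(z;\tau,a)=e^{4\zeta(a)z}\,\frac{\sigma(z-a)^2}{\sigma(z+a)^2}
\]
from Theorem~\ref{section4, main thm}(iii) (itself obtained via the Baker--Akhiezer machinery of Sections~\ref{Integrability and Spectral Theory for the curvature equation}--\ref{The monodromy theory for the ODE}), and the formula
\[
h(z;\tau)=e^{2\zeta(a)z}\,\frac{\sigma(z-a)}{\sigma(z+a)}
\]
from \cite[(3.16)]{LW-AnnMath}, and simply observes that squaring the second gives the first.

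Your argument instead matches the two maps abstractly: monodromy squaring (via Theorem~\ref{Main theorem2}) shows $F=f/h^2$ is $\Lambda_\tau$-periodic, a divisor count on $E_\tau$ shows $F$ has no zeros or poles, and the evenness normalization at $z=0$ fixes the constant. This is more conceptual and avoids invoking the explicit $\sigma$-formula for $f$; on the other hand it still leans on Theorem~\ref{Main theorem2}, which is derived from the same spectral-curve infrastructure that produces that formula, so the net input is comparable. The ``main obstacle'' you flag --- pinning down $\operatorname{div}(h)=[a]-[-a]$ --- is in fact handled by exactly the argument you sketch (regularity of the Schwarzian off $0$ forces simple zeros/poles, blow-up location and the degree-zero constraint do the rest), so you do not strictly need the explicit $\sigma$-representation from \cite{LW-AnnMath}, only its consequence about the blow-up set. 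One small point: your normalization $h(0)=1$ differs from the paper's choice (where $h(0)=\sigma(-a)/\sigma(a)=-1$), but since only $h^2$ enters this is harmless.
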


Recently, it was shown in \cite[Theorem 3.3]{KLW} that the curvature equation%
\begin{equation}
\Delta u+e^{u}=8\pi\delta_{0}+4\pi(\delta_{a}+\delta_{-a})\text{ on }E_{\tau},
\label{curvature special}%
\end{equation}
admits another family of solutions, denoted by $\left\{  \mathfrak{u}_{\beta
}(z;\tau,a)\right\}  _{(r,s)}$, which shares the same monodromy data $\left(
r,s\right)  $ as $\left\{  v_{\beta}(z;\tau)\right\}  _{(r,s)}$ arising from
the single-source equation (\ref{Curvature single}). Moreover, 
\[
\left\{
\mathfrak{u}_{\beta}(z;\tau,a)\right\}  _{(r,s)}\text{ is an even family of \eqref{Curvature 1}$_{\tau,a}$ }\Longleftrightarrow\,\wp(a;\tau)=0;
\]
see  Theorem 3.3-(ii) and (3.17) in \cite{KLW} for $p_*=a=r+s\tau$.

We now state our final main result, which establishes a rigidity phenomenon.

\begin{theorem}
\label{Main theorem3}Assume $G(z;\tau)$ admits a pair of nontrivial critical
points $\pm a$ satisfying (\ref{nontrivial critical point}). Then for the
curvature equation (\ref{curvature special}), the two families
\[
\left\{  \mathbf{u}_{\beta}(z;\tau,a)\right\}  _{(2r,2s)}=\left\{
\mathfrak{u}_{\beta}(z;\tau,a)\right\}  _{(r,s)}%
\]
if and only if
\[
\tau=\rho=e^{\pi i/3},\text{ }\left(  r,s\right)  =\left(  \frac{1}{3}%
,\frac{1}{3}\right)  ,\text{ }a=\frac{1+\rho}{3}%
\]
In such case, the even family $\left\{  \mathfrak{u}_{\beta}(z;\tau
,a)\right\}  _{(1/3,1/3)}$ coincides with the explicit example presented in
(\ref{example}).
\end{theorem}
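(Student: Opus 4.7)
\textbf{Proof plan for Theorem \ref{Main theorem3}.}

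My plan is to prove the two directions separately, using Theorems~\ref{Main theorem1}, \ref{Main theorem2}, \ref{Main theorem4} together with Theorem~3.3 of \cite{KLW}. For the sufficiency direction, I assume $\tau=\rho$, $(r,s)=(1/3,1/3)$, and $a=(1+\rho)/3$. The classical equianharmonic identity $\wp\!\left(\tfrac{1+\rho}{3};\rho\right)=0$ holds because $g_{2}(\rho)=0$ and the zeros of $\wp(\cdot;\rho)$ are precisely $\pm(1+\rho)/3$. By Theorem~3.3(ii) of \cite{KLW} this forces $\{\mathfrak{u}_\beta(z;\rho,(1+\rho)/3)\}_{(1/3,1/3)}$ to be an even family of \eqref{curvature special}. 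On the other hand, $(1+\rho)/3$ is a nontrivial critical point of $G(z;\rho)$, so Theorem~\ref{Main theorem1} supplies the unique even family $\{\mathbf{u}_\beta(z;\rho,(1+\rho)/3)\}$, whose monodromy is $(2/3,2/3)$ by Theorem~\ref{Main theorem2}. Uniqueness of the even family in Theorem~\ref{Main theorem1} then forces the two to coincide, and both are identified with the explicit example \eqref{example}.

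For the necessity direction, assume that the two families coincide. Then $\{\mathfrak{u}_\beta(z;\tau,a)\}_{(r,s)}$ is itself an even family, and Theorem~3.3(ii) of \cite{KLW} yields $\wp(a;\tau)=0$. Normalize each family so that its unique even member occurs at $\beta=0$; the coincidence then forces $\mathbf{u}_{0}(z;\tau,a)=\mathfrak{u}_{0}(z;\tau,a)$, so by the Liouville representation their developing maps must differ by a Möbius transformation $M\in\mathrm{SU}(2)$: $g=M\circ f$, with $f=h^{2}$ by Theorem~\ref{Main theorem4}. Comparing the two quasi-periodicity laws (monodromy $(2r,2s)$ for $f$ versus $(r,s)$ for $g$) and using $(r,s)\notin\tfrac{1}{2}\mathbb{Z}^{2}$, a direct coefficient computation with $M(w)=(\alpha w+\beta)/(\gamma w+\delta)$ rules out every form except the inversion $M(w)=c/w$. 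Substituting this back produces the congruence $(2r,2s)\equiv -(r,s)\pmod{\mathbb{Z}^{2}}$, hence $(3r,3s)\in\mathbb{Z}^{2}$, so that $a=r+s\tau$ is a nonzero $3$-torsion point of $E_{\tau}$.

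To finish the necessity direction, I combine $\wp(a;\tau)=0$ with $3a\equiv 0$. Since $2a\equiv -a$, applying the duplication formula for $\wp$ and substituting $\wp'(a)^{2}=4\wp(a)^{3}-g_{2}\wp(a)-g_{3}$ yields the $3$-division relation
\[
12\,\wp(a)^{4}-6g_{2}(\tau)\,\wp(a)^{2}-12g_{3}(\tau)\,\wp(a)-\tfrac{1}{4}g_{2}(\tau)^{2}=0.
\]
Setting $\wp(a)=0$ forces $g_{2}(\tau)=0$, and therefore $\tau=\rho$ up to the $\mathrm{SL}_{2}(\mathbb{Z})$-action. Once $\tau=\rho$ is pinned down, the only zeros of $\wp(\cdot;\rho)$ are $\pm(1+\rho)/3$, so among the finite list of $3$-torsion candidates only $(r,s)=(1/3,1/3)$ with $a=(1+\rho)/3$ survives, and the sufficiency direction identifies the resulting family with \eqref{example}.

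I expect the most delicate step to be the Möbius analysis in the necessity direction, since one must systematically rule out all elements of $\mathrm{SU}(2)$ other than the inversion by coefficient matching, relying crucially on the exclusion $(r,s)\notin\tfrac{1}{2}\mathbb{Z}^{2}$. Once the inversion form is secured, the discrete conclusion $(r,s)\in\tfrac{1}{3}\mathbb{Z}^{2}$ is immediate, and the remaining rigidity is a clean division-polynomial argument singling out the equianharmonic modulus.
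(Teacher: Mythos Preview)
Your necessity argument is correct and takes a genuinely different route from the paper. Both proofs first use Theorem~3.3(ii) of \cite{KLW} to obtain $\wp(a;\tau)=0$. The paper then invokes the blow-up criterion Theorem~3.5 of \cite{KLW} (the family $\{\mathfrak{u}_\beta\}_{(r,s)}$ blows up at the cone point $a$ iff $2a\equiv\pm(r+s\tau)$) to conclude $2a\equiv\pm a$, i.e.\ $a$ is $3$-torsion, and then uses the duplication formula to get $\wp''(a)=0$ and hence $g_2(\tau)=0$. Your M\"obius comparison of the two type~II developing maps is a valid alternative that avoids citing Theorem~3.5: the constraint $M(\lambda_j w)=\mu_j M(w)$ with $(\lambda_j,\mu_j)$ coming from the two monodromy data, together with $(r,s)\notin\tfrac12\mathbb{Z}^2$, does force $M$ to be anti-diagonal and yields $(3r,3s)\in\mathbb{Z}^2$. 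Your division-polynomial computation then reaches the same conclusion $g_2(\tau)=0$. Your approach is more self-contained (it stays within the developing-map formalism already set up in this paper), while the paper's is shorter because it offloads the key step to \cite{KLW}.

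There is, however, a genuine gap in your sufficiency argument. You write that ``uniqueness of the even family in Theorem~\ref{Main theorem1} then forces the two to coincide,'' but the uniqueness in Theorem~\ref{Main theorem1} (equivalently Theorem~\ref{section4, main thm}(iv)) is only for even families \emph{that blow up at the cone singularities $\pm a$ as $\beta\to\pm\infty$}. You have shown via Theorem~3.3(ii) of \cite{KLW} that $\{\mathfrak{u}_\beta(z;\rho,(1+\rho)/3)\}_{(1/3,1/3)}$ is an even family, but you have not verified that it blows up at $\pm(1+\rho)/3$; a priori there could be other even families of \eqref{curvature special} with different blow-up loci. The paper closes this gap by again invoking Theorem~3.5 of \cite{KLW}: since $2a=\tfrac{2(1+\rho)}{3}\equiv -a$, that criterion gives blow-up at $a$ as $\beta\to+\infty$, and evenness then gives blow-up at $-a$ as $\beta\to-\infty$. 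You need to insert this step (or an equivalent one) before you can apply the uniqueness.
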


Accordingly, $\left\{  \mathfrak{u}_{\beta}(z;\tau,a)\right\}  _{(r,s)}$ is a
non-even family if $\left(  r,s\right)  \not =\left(  \frac{1}{3},\frac{1}%
{3}\right)  $. In such case, the blow-up configuration is also completely
determined by $a$; see \cite[Theorem 3.8]{KLW}. Thus, the non-even family
$\{\mathfrak{u}_{\beta}(z;\tau,a)\}_{(r,s)}$ and the even family $\left\{
\mathbf{u}_{\beta}(z;\tau,a)\right\}  _{(2r,2s)}$ represent two distinct types
of solutions to the same curvature equation (\ref{curvature special}). These
two families are linked via the even family $\left\{  v_{\beta}(z;\tau
)\right\}  _{(r,s)}$. 

Remarkably, while all three families are governed by the same underlying torus
geometry, they reflect it in fundamentally different ways. However, the
precise geometric distinction between $\left\{  \mathfrak{u}_{\beta}%
(z;\tau,a)\right\}  _{(r,s)}$ and $\{\mathbf{u}_{\beta}(z;$ $\tau
,a)\}_{(2r,2s)}$ remains to be clarified. In particular, the behavior of these
families as $a\rightarrow\frac{\omega_{k}}{2},$ for $k=0,1,2,3$ is not yet
fully understood. Clarifying the geometric implications of this distinction
represents a promising direction for further study.

\textbf{Organization of the Paper: }

In Section \ref{Integrability and Spectral Theory for the curvature equation}, we introduce the generalized Lam\'{e}-type equation associated
with the curvature equation (\ref{Curvature 1}) in the presence of an even
family of solutions. We formulate the corresponding spectral polynomial and
spectral curve, and investigate the complete reducibility of this class of
differential equations.

In Section \ref{The monodromy theory for the ODE}, we establish a precise criterion for the existence of an even
family of blow-up solutions at a cone singularity in terms of the structure of
the associated Lam\'{e}-type equation.

In Section \ref{The Proofs}, we present the proofs of Theorems \ref{Main theorem1},
\ref{Main theorem2}, and \ref{Main theorem3}.\vspace{5pt}

\textbf{Acknowledgement:} The authors are grateful to Professor Chin-Lung Wang for providing the Figure \ref{Omega}. Ting-Jung Kuo was supported by NSTC 113-2628-M-003-001-MY4. He is also grateful to the National Center for Theoretical Sciences (NCTS) for its constant support. 
\vspace{5pt}

\section{Integrability and Spectral Theory for the Curvature Equation}
\label{Integrability and Spectral Theory for the curvature equation}
Let $\wp(z;\tau)$ be the Weierstrass elliptic function with periods $\omega_{1}=1$ and $\omega_{2}=\tau$, defined by
	\[
	\wp\left(  z;\tau\right)  =\frac{1}{z^{2}}+\sum_{(m,n)\in\mathbb{Z}%
		^{2}\backslash(0,0)}\left[  \frac{1}{\left(  z-m-n\tau\right)  ^{2}}-\frac
	{1}{(m+n\tau)^{2}}\right]  .
	\]
	The Weierstrass zeta function is defined by
	\[
	\zeta\left(  z;\tau\right)  :=-\int^{z}\wp(\xi;\tau)\,d\xi,
	\]
     which is quasi-periodic, satisfying%
	\[
	\zeta\left(  z+\omega_{j};\tau\right)  =\zeta\left(  z;\tau\right)  +\eta
	_{j}(\tau)\text{, }j=1,2,
	\]
	where $\eta_{1}(\tau)$ and $\eta_{2}(\tau)$ are the corresponding quasi-periods.
	And let $\sigma(z;\tau)$ be the Weierstrass sigma function, given by
	\[
	\sigma(z;\tau):=\exp\int^{z} \zeta(\xi;\tau)d\xi,
	\]
    which satisfies the following transformation law:
    \begin{equation}
\sigma(z+\omega_{j};\tau)=-e^{\eta_{j}\left(  z+\frac{\omega_{j}}{2}\right)
}\sigma(z;\tau),\quad
\,j=1,2.\label{transformation law of the Weierstrass sigma function}%
\end{equation}
	For brevity, we denote $\wp(z;\tau)$, $\zeta(z;\tau)$ and $\sigma(z;\tau)$ simply by $\wp(z)$, $\zeta(z)$ and $\sigma(z)$, respectively. For further properties of these classical elliptic functions, see \cite{Akhiezer}.

The existence of solutions to the curvature equation \eqref{Curvature 1}$_{\tau,p}$ is closely related to the study of the monodromy of the following generalized Lam\'{e}-type equation:
\begin{equation}\label{GLE1,even}
y''(z)=q(z;A)\,y(z)\quad\text{on}\quad E_\tau,
\end{equation}
where the potential is given by
\begin{equation}
\label{even potential}q(z;A)=\left(
\begin{array}
[c]{l}%
2\wp(z)+\dfrac{3}{4}\left(  \wp(z+p)+\wp(z-p)\right) \\
+A\left(  \zeta(z+p)-\zeta(z-p)\right)  +B
\end{array}
\right),
\end{equation}
with $B$ determined by
\begin{equation}
\label{B in even potential}B=A^{2}-\zeta(2p)\,A-\dfrac{3}{4}%
\wp(2p)-2\wp(p).
\end{equation}
Since the potential $q(z;A)$ is an even elliptic function, this corresponds to seeking an even family of solutions to the curvature equation \eqref{Curvature 1}$_{\tau,p}$.

Under \eqref{B in even potential}, all singularities of \eqref{GLE1,even}$_{\tau,p,A}$ are \textbf{apparent}; namely, all local solutions are free of logarithmic singularities.

We briefly recall the monodromy representation of \eqref{GLE1,even}$_{\tau,p,A}$ under the apparent condition \eqref{B in even potential}.
Note that the local exponents at $z=0,\pm p$ are $-1,\,2$ and $-1/2,\,3/2$, respectively. Consequently, the corresponding local monodromy matrices are
\[
N_0=I_2\quad\text{and}\quad N_{\pm p}=-I_2.
\]
Fix a base point $z_0\in E_\tau\setminus\{0,\pm p\}$ such that the fundamental cycles $z_0\mapsto z_0+\omega_j$, $j=1,2$, do not intersect with the points $\{0,\pm p\}+\Lambda_\tau$. Let 
\[Y(z;z_0)=(y_1(z;z_0),\,y_2(z;z_0))^t\]
be a fundamental solution of the generalized Lam\'{e}-type equation \eqref{GLE1,even}$_{\tau,p,A}$ near $z_0$. The global monodromy matrices $M_j(p,A)$, $j=1,2$, are defined by analytic continuation along two fundamental cycles:
\[
Y(z+\omega_j;z_0)=M_j(p,A)\,Y(z;z_0),\quad j=1,2.
\]
These matrices $M_j(p,A)$, $j=1,2$, satisfy the following monodromy relation:
\[
M_1(p,A)\,M_2(p,A)=M_2(p,A)\,M_1(p,A).
\]
This leads to the following two cases:\\
\textbf{(i) Completely Reducible Case:} The matrices $M_j(p,A)$, $j=1,2$, can be simultaneously diagonalized, taking the form
\begin{equation}\label{completely reducible}
    M_1(p,A)=
\left(\begin{array}{cc}
    e^{-2\pi i s} & 0 \\
    0 & e^{2\pi i s}
\end{array}\right),\quad
M_2(p,A)=
\left(\begin{array}{cc}
    e^{2\pi ir} & 0 \\
    0 & e^{-2\pi ir}
\end{array}\right),
\end{equation}
for some $(r,s)\in\mathbb{C}^2$.\\
\textbf{(ii) Non-Completely Reducible Case:} The matrices $M_j(p,A)$, $j=1,2$, cannot be simultaneously diagonalized. In this situation, there exists $\mathcal{D}\in\mathbb{C}\cup\{\infty\}$ such that, up to a common conjugation, $M_j(p,A)$, $j=1,2$, can be represented by 
\begin{equation}\label{non-completely reducible1}
    M_1(p,A)=\pm
\left(\begin{array}{cc}
    1 & 0 \\
    1 & 1
\end{array}\right),\quad
M_2(p,A)=\pm
\left(\begin{array}{cc}
    1 & 0 \\
    \mathcal{D} & 1
\end{array}\right).
\end{equation}
In the case $\mathcal{D}=\infty$, this should be interpreted as
\begin{equation}\label{non-completely reducible2}
    M_1(p,A)=\pm
\left(\begin{array}{cc}
    1 & 0 \\
    0 & 1
\end{array}\right),\quad
M_2(p,A)=\pm
\left(\begin{array}{cc}
    1 & 0 \\
    1 & 1
\end{array}\right).
\end{equation}
The pair $(r,s)\in\mathbb{C}^2$ in Case (i) and $\mathcal{D}\in\mathbb{C}\cup\{\infty\}$ in Case (ii), are respectively called the monodromy data of equation \eqref{GLE1,even}$_{\tau,p,A}$.

The following theorem establishes the deep connection between the even family solutions of the curvature equation \eqref{Curvature 1}$_{\tau,p}$ and the generalized Lam\'{e}-type equation \eqref{GLE1,even}$_{\tau,p,A}$. 

\begin{theorem}[\textit{\cite[Theorem 3.1]{Chen-Kuo-Lin-Painleve VI}}]\label{pde to ode}
    Let $p\in E_{\tau}\setminus E_\tau[2]$. The curvature equation \eqref{Curvature 1}$_{\tau,p}$ on $E_\tau$ admits an \textbf{even} family of solutions if and only if there exists $A\in\mathbb{C}$, with $B$ given by \eqref{B in even potential}, such that the generalized Lam\'{e}-type equation \eqref{GLE1,even}$_{\tau,p,A}$ is completely reducible with unitary monodromy.
\end{theorem}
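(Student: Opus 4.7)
My plan is to identify the developing map of the unique even solution in the family with the ratio $y_1/y_2$ of two fundamental solutions of the Lam\'{e}-type equation, and then translate the type II condition \eqref{type II} on that developing map into the completely reducible unitary monodromy condition on the ODE.

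\textbf{($\Rightarrow$).} Given an even family $\{u_\beta\}$, let $u_0$ be its unique even member with even type II developing map $f_0$, so that $f_0(-z)=1/f_0(z)$. I set
\[
y_1(z) := f_0(z)/\sqrt{f_0'(z)},\qquad y_2(z) := 1/\sqrt{f_0'(z)},
\]
so that $f_0 = y_1/y_2$ and the Wronskian is constant. A direct computation using the Schwarzian derivative identifies both $y_i$ as local solutions of $y'' = q(z) y$ with $q = -\tfrac{1}{2}\{f_0,z\}$. The symmetry $f_0(-z)=1/f_0(z)$ makes $q$ even, and the type II transformation of $f_0$ makes $q$ $\Lambda_\tau$-invariant, so $q$ is an even elliptic function on $E_\tau$. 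A local analysis at the cone singularities (using $u_0 = 4\log|z| + O(1)$ near $0$ and $u_0 = 2\log|z\mp p| + O(1)$ near $\pm p$) forces the double-pole coefficients of $q$ to be $2$ at $0$ and $3/4$ at $\pm p$; evenness then constrains the simple-pole residues at $\pm p$ to be opposite, so $q = q(z;A)$ as in \eqref{even potential} for some $A$ and $B$. Apparentness at $\pm p$ (no logarithmic terms) pins down $B$ via \eqref{B in even potential}; apparentness at $0$ is automatic because $q$ is even around $0$. Finally, the type II condition on $f_0=y_1/y_2$ with real exponents simultaneously diagonalizes $M_1, M_2$ in the unitary form \eqref{completely reducible}.

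\textbf{($\Leftarrow$).} Conversely, given $A$ such that \eqref{GLE1,even}$_{\tau,p,A}$ is completely reducible with unitary monodromy, I choose the common eigenbasis $y_1, y_2$ realizing \eqref{completely reducible} with $(r,s)\in\mathbb{R}^2$, and set $f_0:=y_1/y_2$. The diagonal unitary monodromy gives the type II law \eqref{type II} for $f_0$, so that $u_0:=\log(8|f_0'|^2/(1+|f_0|^2)^2)$ is $\Lambda_\tau$-invariant (unit scalar multiplication of $f_0$ preserves the Liouville form) and descends to $E_\tau$. The local exponents $(-1,2)$ at $0$ and $(-1/2,3/2)$ at $\pm p$, together with apparentness secured by \eqref{B in even potential}, force $f_0$ to have a zero or pole of order $3$ at $0$ and critical points of order $2$ at $\pm p$, producing exactly the delta contributions on the right of \eqref{Curvature 1}$_{\tau,p}$. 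Evenness of $q(z;A)$ provides an involution on the two-dimensional solution space commuting with the monodromy, which allows one to rescale $y_1, y_2$ so that $f_0(-z)=1/f_0(z)$; thus $u_0$ is even and the family $\{u_\beta\}$ from \eqref{family} is the desired even family.

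\textbf{Main obstacle.} The most delicate step is arranging the normalization in the reverse direction: one must show that the common eigenbasis of $M_1, M_2$ can be chosen so that the resulting $f_0$ is genuinely anti-symmetric in the sense $f_0(-z) = 1/f_0(z)$, rather than merely $f_0(-z) = c/f_0(z)$ for some unit constant $c$ that would still give a real-valued metric but not directly an even $u_0$. Resolving this requires exploiting the $z \mapsto -z$ symmetry of $q(z;A)$ to produce an involution on the solution space, checking that it swaps the two eigendirections of the monodromy (the two eigenvalues being complex conjugates under the real-monodromy hypothesis), and then absorbing the constant $c$ by a rescaling; without this step the reverse implication would only yield a family all of whose members solve the curvature equation, not necessarily one containing an even member.
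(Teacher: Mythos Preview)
Your proposal is correct and follows essentially the same route as the paper's proof: Schwarzian derivative to identify $q$, local analysis at $0,\pm p$ to fix its form, apparentness to pin down $B$, and the $z\mapsto-z$ involution on the solution space to secure evenness in the reverse direction. The only tactical differences are that the paper derives unitarity from the identity $2\sqrt{2}\,W\,e^{-u/2}=|y_1|^2+|y_2|^2$ (so that single-valuedness and double periodicity of $u$ force $M_j\in\mathrm{SU}(2)$) rather than from the type~II condition on $f_0$, and in the reverse direction it simply takes $y_2(z):=y_1(-z)$ from the outset, so that $f(-z)=1/f(z)$ is automatic---your involution/eigenline-swap argument is exactly what justifies that $y_1(z)$ and $y_1(-z)$ are linearly independent.
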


Theorem \ref{pde to ode} was proved in \cite{Chen-Kuo-Lin-Painleve VI}.
For the convenience of the readers, we briefly summarize the idea of the proof:
 \begin{proof}
  	For the necessity, suppose $u(z)$ is an even solution of \eqref{Curvature 1}$_{\tau,p}$. Consider the function $u_{zz}-\frac{1}{2}u_{z}^2$. By \eqref{Curvature 1}$_{\tau,p}$, it is easy to verify
		\[
		\left(u_{zz}-\frac{1}{2}u_{z}^2\right)_{\bar{z}}=0,\quad\forall z\notin\{0,\pm p\}.
		\]
		Since the local behavior of $u(z)$ at $0$ and $\pm p$ are given by
		
		\[	u(z)=4\ln|z|+O(1)\quad\text{near }0,\]
		and
		\[	u(z)=2\ln|z\mp p|+O(1)\quad\text{near }\pm p,\]
		we obtain that $u_{zz}-\frac{1}{2}u_{z}^2$ has double poles at $0$ and $\pm p$. Consequently, $u_{zz}-\frac{1}{2}u_{z}^2$ is an even elliptic function, which can be expressed in terms of classical elliptic functions as below:
		\begin{align}
			u_{zz} - \frac{1}{2}u_{z}^{2} = -2 \Biggl( & \, 2\wp(z) + \frac{3}{4}\left( \wp(z+p) + \wp(z-p) \right) \nonumber\\
			& + A\left( \zeta(z+p) - \zeta(z-p) \right) + B \Biggr)=-2q(z;A),
			\label{-2q(z)}
		\end{align}
		where
		\[
		A=\frac{1}{2}Res_{z=p}\left(u_{zz}-\frac{1}{2}u_{z}^2\right)=-\frac{1}{2}Res_{z=-p}\left(u_{zz}-\frac{1}{2}u_{z}^2\right),
		\]
		and $B\in\mathbb{C}$. We will show that $B$ is given by \eqref{B in even potential}.
		
		By Liouville Theorem, there exists a developing map $f(z)$ such that
		\begin{equation}
			u(z)=\log\frac{8|f^{\prime}(z)|^2}{\left(1+|f(z)|^2\right)^2}.\label{u(z) by Liouville Theorem}
		\end{equation}
		A direct computation shows that the Schwartzian derivative $\{f;z\}$ satisfies
		\begin{equation}
			\left\{f;z\right\}=u_{zz} - \frac{1}{2}u_{z}^{2}. \label{schwartz derivative}
		\end{equation}
		By \eqref{schwartz derivative} and \eqref{-2q(z)}, we obtain 
		\[
		\{f;z\}=-2q(z;A).
		\]
		A classical result then implies the existence of two linearly independent solutions   $y_1(z)$ and $y_2(z)$ of generalized Lam\'{e}-type equation \eqref{GLE1,even}$_{\tau,p,A}$ such that
		\begin{equation}
			f(z)=\frac{y_1(z)}{y_2(z)}.   \label{f=y1/y2}
		\end{equation}
		Define the Wronskian
		\[
		W=y_{1}^{\prime}(z)y_{2}(z)-y_{1}(z)y_{2}^{\prime}(z),
		\]
		which is a nonzero constant. Substituting \eqref{f=y1/y2} into \eqref{schwartz derivative} yields
		\[
		2\sqrt{2}W e^{-\frac{1}{2}u(z)}=|y_1(z)|^2+|y_2(z)|^2.
		\]
		Since $u(z)$ is single-valued and doubly periodic,  it follows  that the monodromy matrices with respect to $(y_1(z),y_2(z))$ lie in $SU(2)$, that is, the monodromy is unitary. This implies that the generalized Lamé-type equation \eqref{GLE1,even}$_{\tau,p,A}$ is completely reducible. 
		
		Note that if \eqref{GLE1,even}$_{\tau,p,A}$ has logarithmic singularities at $\pm p$,  the local monodromy matrix at $\pm p$ can not be in $\mathrm{SU}(2)$. Therefore, \eqref{GLE1,even}$_{\tau,p,A}$ is apparent at $\pm p$, which implies that $B$ is given by \eqref{B in even potential}.
		
		For the sufficiency, it suffices to prove the existence of even solution to \eqref{Curvature 1}$_{\tau,p}$. Assume there exist $A\in\mathbb{C}$ and $B$ as given by \eqref{even potential} such that \eqref{GLE1,even}$_{\tau,p,A}$ is completely reducible and has unitary monodromy. Then by Theorem \ref{monodromy+spectral main theorem}, there exists a fundamental solution $Y(z)=(y_1(z),y_2(z))^{t}$ such that the monodromy matrices $M_j(p,A)$ satisfy Case (i) with $(r,s)\in\mathbb{R}^2\setminus\frac{1}{2}\mathbb{Z}^2$. Actually, we may take $y_2(z)=y_1(-z)$.
		
		Define 
		\begin{equation}
			f(z):=\frac{y_1(z)}{y_2(z)}=\frac{y_1(z)}{y_1(-z)}.   \label{developing map: even case}
		\end{equation}
		Since the local exponent of $y_j(z)$ at $\pm p$ is either $-\frac{1}{2}$ or $\frac{3}{2}$, then the local exponent of $f$ at $\pm p$ is either $0$ or $\pm 2$, which implies $f(z)$ is single-valued near $z=\pm p$. Thus $f(z)$ can be extended to be a meromorphic function on $\mathbb{C}$, and satsfies type II condition \eqref{type II} for $(r,s)\in\mathbb{R}^2\setminus\frac{1}{2}\mathbb{Z}^2$. 
		
		Define
		\begin{equation}
			u(z):=\log\frac{8|f^{\prime}(z)|^2}{\left(1+|f(z)|^2\right)^2}.  \label{u(z) even's def}
		\end{equation}
		Then by \eqref{type II}, $u(z)$ is doubly periodic. From \eqref{developing map: even case}, we can derive
		\begin{equation}
			\left(\frac{f^{\prime\prime}}{f^{\prime}}\right)^{\prime}-\frac{1}{2}\left(\frac{f^{\prime\prime}}{f^{\prime}}\right)^{2}=-2q(z;A). \label{equ 09101}
		\end{equation} 
		Since the local exponent of $y_j(z)$ at $0$ is either $-1$ or $2$, then the local exponent $\rho$ of $f(z)$ at $0$  belongs to $\{0,\pm 3\}$. Then by \eqref{equ 09101}, we obtain
		\begin{equation*}
			f(z)=
			\begin{cases}
				f(0)+a_3 z^3+O(z^4),\quad    &a_3\neq 0,\quad\text{if }\rho=0;\\
				a_{-3}z^{-3}+O(z^{-2}),\quad &a_{-3}\neq0,\quad\text{if }\rho=-3;\\
				a_{3}^{\prime}z^3+O(z^4),\quad &a_{3}^{\prime}\neq 0,\quad\text{if }\rho=3.
			\end{cases}
		\end{equation*}
		Substituting into \eqref{u(z) even's def}, we see that
		\[
		u(z)=4\ln|z|+O(1)\quad\text{near }0.
		\]
		A similar analysis shows that
		\begin{equation*}
			u(z)=2\ln|z\mp p|+O(1)\quad\text{near }z=\pm p,
		\end{equation*}
		and 
		\[
		\Delta u+e^u=0\quad\text{on } E_{\tau}\setminus\{0,\pm p\}.
		\]
		Therefore,  $u(z)$ is a solution to \eqref{Curvature 1}$_{\tau,p}$ . Moreover, since $f(z)=\frac{1}{f(-z)}$ by \eqref{developing map: even case}, it follows that $u(z)=u(-z)$ is an even solution.
\end{proof}

In view of Theorem \ref{pde to ode}, two fundamental issues arise:
\begin{enumerate}
    \item [(1)] When does a generalized Lam\'{e}-type equation \eqref{GLE1,even}$_{\tau,p,A}$ admit completely reducible monodromy representation?
    \item [(2)] After (1), when is the monodromy representation unitary, up to a conjugation?
\end{enumerate}

To address (1), we need to develop the spectral theory of the generalized Lam\'{e}-type equation \eqref{GLE1,even}$_{\tau,p,A}$. We borrow the idea from elliptic Korteweg-De Vries (KdV) theory and these computations had been completed in \cite{KLW}. For the completeness of this paper, we briefly introduce them as below.

Consider the following second symmetric product of equation \eqref{GLE1,even}$_{\tau,p,A}$:
\begin{equation}
\Phi^{\prime\prime\prime}(z)-4q(z;A)\,\Phi^{\prime}(z)-2q^{\prime}%
(z;A)\,\Phi(z)=0\quad\text{on}\quad E_{\tau}.\label{3rd ODE}%
\end{equation}
The next theorem asserts the existence and uniqueness of the elliptic solution to equation \eqref{3rd ODE}.

\begin{theorem}[\textit{\cite[Theorem 4.5]{KLW}}]\label{unique elliptic sol, 3rd ode, even}
Up to a nonzero multiple, there exists a unique non-trivial elliptic solution $\Phi_{e}(z;A)$ of \eqref{3rd ODE} given by
\begin{equation}
\label{elliptic solution to 3rd ode, even case*}\Phi_{e}(z;A)=\left[
\begin{array}
[c]{l}%
\wp(z)+ \left(  \dfrac{A}{2}-\dfrac{3}{8}\dfrac{\wp^{\prime\prime}(p)}
{\wp^{\prime}(p)}\right)  \left(  \zeta(z+p)-\zeta(z-p)\right) \\[10pt]%
-A^{2}+\left(  \dfrac{\wp^{\prime\prime}(p)}{\,2\wp^{\prime}(p)\,}%
-\zeta(p)\right)  A\\[10pt]%
-\wp(p)+\dfrac{3}{4}\dfrac{\,\wp^{\prime\prime}(p)\,}{\wp^{\prime}(p)}%
\zeta(p)+\dfrac{3}{16}\dfrac{\,\wp^{\prime\prime}(p)^{2}\,}{\wp^{\prime
}(p)^{2}}%
\end{array}
\right].
\end{equation}
\end{theorem}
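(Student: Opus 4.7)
The plan is to construct $\Phi_e(z;A)$ explicitly by an ansatz dictated by the singularity structure of the symmetric product equation \eqref{3rd ODE}. Since \eqref{3rd ODE} is the second symmetric product of $y''=q(z;A)y$, its three-dimensional solution space is spanned by products $y_iy_j$ of any fundamental system $\{y_1,y_2\}$ of the Lam\'e-type equation. The local exponents $\{-1,2\}$ at $z=0$ and $\{-1/2,3/2\}$ at $z=\pm p$ imply that a Bloch-type product $y_+y_-$ can carry at worst a double pole at $z=0$ and simple poles at $z=\pm p$, and the evenness of $q$ ensures such a product can be chosen even in $z$. The only even elliptic functions on $E_\tau$ with this pole profile form the three-dimensional space spanned by $\wp(z)$, $\zeta(z+p)-\zeta(z-p)$ (which is genuinely doubly periodic because the quasi-periods of $\zeta$ cancel), and the constants.

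Guided by this, I would write the ansatz
\[
\Phi_e(z;A)=\wp(z)+c_1(A,p)\bigl(\zeta(z+p)-\zeta(z-p)\bigr)+c_0(A,p),
\]
normalizing the leading coefficient at $z=0$ to $1$. To pin down $c_0,c_1$, substitute this ansatz into \eqref{3rd ODE} and expand $\Phi_e'''-4q\Phi_e'-2q'\Phi_e$ as a Laurent series at $z=p$ (the behavior at $z=-p$ follows by parity and at $z=0$ by the chosen normalization). Requiring the negative-order Laurent coefficients at $z=p$ to vanish produces a finite system of polynomial equations in $c_0,c_1$ with coefficients involving $\wp(p),\wp'(p),\wp''(p),\zeta(p)$, and $A$. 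Using the Weierstrass identity $\wp''=6\wp^2-g_2/2$ together with the apparent condition \eqref{B in even potential} on $B$, this system is consistent and admits the unique solution displayed in \eqref{elliptic solution to 3rd ode, even case*}. Once the singular parts cancel at $\{0,\pm p\}$, the expression $\Phi_e'''-4q\Phi_e'-2q'\Phi_e$ becomes a regular elliptic function; being odd in $z$ (since $\Phi_e$ is even while $q$ is even and differentiation flips parity), it must vanish identically. Uniqueness up to a nonzero multiple then follows by observing that the induced action of the two monodromy matrices on the symmetric square of the Lam\'e solution space has a common one-dimensional invariant subspace, spanned by $y_+y_-$ in the completely reducible case and by the square of the Jordan eigenvector in the non-completely reducible case.

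The main technical obstacle will be the Laurent expansion at $z=p$: the terms $\zeta(z\pm p)$, $\wp(z)$, and $\wp(z\pm p)$ each contribute nontrivial singular and regular pieces that mix with $q(z;A)$, and the algebraic simplification producing the specific coefficient $\tfrac{A}{2}-\tfrac{3}{8}\wp''(p)/\wp'(p)$ and the stated constant term requires systematic use of the Weierstrass relations at $p$. The apparent condition \eqref{B in even potential} enters precisely to cancel the dangerous $(z-p)^{-3}$ and $(z-p)^{-2}$ contributions; without it, no doubly periodic solution of \eqref{3rd ODE} of the assumed form exists.
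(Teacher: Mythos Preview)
The paper does not prove this theorem; it is quoted from \cite[Theorem 4.5]{KLW} and the surrounding text only says the computations ``had been completed in \cite{KLW}.'' There is therefore no proof in the present paper to compare your proposal against.

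That said, your outline is the standard direct argument and is sound. The singularity analysis correctly identifies the three-dimensional space of candidate even elliptic functions (spanned by $\wp(z)$, $\zeta(z+p)-\zeta(z-p)$, and $1$); substituting the ansatz into \eqref{3rd ODE} and killing the Laurent tail at $z=p$ determines $c_0,c_1$ uniquely; and the parity observation that $\Phi_e'''-4q\Phi_e'-2q'\Phi_e$ is odd elliptic, hence identically zero once regular, closes the existence argument. Your uniqueness argument via the monodromy action on the symmetric square is correct and is exactly the mechanism the paper itself invokes later (see the proofs of Theorem~\ref{Q(A) neq 0 iff com.red.} and Theorem~\ref{monodromy+spectral main theorem}, where uniqueness of the elliptic solution is used to force $y_1y_2=\Phi_e=\psi(P;\cdot)\psi(P^*;\cdot)$).

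One small caution: your closing sentence attributes the cancellation of the $(z-p)^{-3}$ and $(z-p)^{-2}$ terms specifically to the apparent condition \eqref{B in even potential}. Since the displayed formula \eqref{elliptic solution to 3rd ode, even case*} does not involve $B$ explicitly, the precise role of \eqref{B in even potential} in the Laurent matching deserves a line of verification rather than an assertion; under the paper's standing hypothesis this is moot, but if you intend the remark as a genuine claim about necessity you should check it.
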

\normalsize
Define
\begin{equation}
    Q(z;A):=\dfrac{1}{2}\Phi_e(z;A)\,\Phi_e''(z;A)-\dfrac{1}{4}(\Phi_e'(z;A))^2-q(z;A)\,\Phi_e^2(z;A).
    \label{Q(A)'s definition}
\end{equation}
Since $\Phi_e(z;A)$ solves \eqref{3rd ODE}, it follows that $\frac{d}{dz}Q(z;A)=0$. Hence, $Q(z;A)$ is independent of $z$, we therefore write $Q(A)$. Furthermore, since $\Phi_e(z;A)$ and $q(z;A)$ are both  polynomials in $A$, $Q(A)$ is then a polynomial in $A$.

We refer to $Q(A)$ as the \textbf{spectral polynomial} associated with the generalized Lam\'{e}-type equation \eqref{GLE1,even}$_{\tau,p,A}$, which is computed explicitly in \cite[Theorem 4.5]{KLW}:
\begin{theorem}[\textit{\cite[Theorem 4.5]{KLW}}]\label{Q(A)'s computation}
The spectral polynomial \(Q(A)\) associated with \eqref{GLE1,even}$_{\tau,p,A}$ is explicitly given by
    \begin{equation}
\label{spectral polynomial in even case}Q(A)=-Y_{1}(A)\, Y_{2}(A),
\end{equation}
where
\begin{align*}
Y_{1}(A)  &  =A^{3} -\frac{5 \wp^{\prime\prime}(p)}{4\wp^{\prime}(p)}A^{2}
+3\left(  \wp(p) +\frac{1}{16}\frac{\wp^{\prime\prime}(p)^{2}}{\wp^{\prime
}(p)^{2}}\right)  A\\
&  \quad\quad+\frac{\wp^{\prime}(p)}{2} -\frac{9\wp^{\prime\prime}(p)}%
{4\wp^{\prime}(p)}\wp(p) +\frac{9}{64}\frac{\wp^{\prime\prime}(p)^{3}}%
{\wp^{\prime}(p)^{3}},\\
Y_{2}(A)  &  =A^{3} - \frac{\wp^{\prime\prime}(p)}{4\wp^{\prime}(p)}A^{2} -
\frac{ 5 }{16}\frac{\wp^{\prime\prime}(p)^{2}}{\wp^{\prime}(p)^{2}}A +
2\wp^{\prime}(p) - \frac{3}{64}\frac{\wp^{\prime\prime}(p)^{3}}{\wp^{\prime}(p)^{3}}.
\end{align*}
\end{theorem}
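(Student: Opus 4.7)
The plan is to exploit the $z$-independence of $Q(z;A)$ noted immediately after \eqref{Q(A)'s definition}: since $Q(z;A)$ is constant in $z$, it may be evaluated at any convenient point, reducing the problem to algebra in the elliptic data at that point. Before computing, I would perform a degree count in $A$. From \eqref{elliptic solution to 3rd ode, even case*}, $\Phi_e(z;A)$ is quadratic in $A$ with leading coefficient $-A^2$, and from \eqref{even potential}--\eqref{B in even potential}, $q(z;A)$ is quadratic in $A$ with leading coefficient $A^2$. The three terms of \eqref{Q(A)'s definition} therefore have $A$-degrees $3$, $2$, and $6$ respectively, so $Q(A)$ is a polynomial of degree $6$ in $A$ with leading coefficient $-1$, already matching the leading term of $-Y_1(A)\,Y_2(A)$.

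To compute $Q(A)$ explicitly, I would Laurent-expand at $z=0$. Using $\wp(z)=z^{-2}+O(z^2)$ together with the Taylor expansions of $\wp(z\pm p)$ and $\zeta(z\pm p)$ about $z=0$, one obtains Laurent expansions of $\Phi_e$, $\Phi_e'$, $\Phi_e''$ and $q$. Substituting into \eqref{Q(A)'s definition} and collecting powers of $z$, the singular parts (orders $z^{-6}$ through $z^{-1}$) must cancel identically since $Q(z;A)$ is $z$-independent, which furnishes strong internal consistency checks; the $z^{0}$-coefficient then yields the desired polynomial $Q(A)$, whose coefficients lie in the subring generated by $\wp(p)$, $\wp'(p)$, $\wp''(p)$, after repeated use of $\wp'(p)^2=4\wp(p)^3-g_2\wp(p)-g_3$ to reduce to a canonical form.

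Finally, I would compare the result with $-Y_1(A)\,Y_2(A)$ by expanding the product and matching coefficients of $A^0,\dots,A^6$. The main obstacle is the sheer length of the bookkeeping rather than any conceptual subtlety; consistent normalization, for instance eliminating $\wp''(p)$ in favor of $\wp(p)$ via $\wp''=6\wp^2-g_2/2$, is essential to keep the algebra tractable. A cleaner, more conceptual route, should it succeed, would be to identify the six roots of $Q(A)$ directly as the spectral values at which \eqref{GLE1,even}$_{\tau,p,A}$ admits a meromorphic eigenfunction; the parity of the equation naturally splits these six values into two triples determined by the choices of local exponents $-1/2$ versus $3/2$ at $\pm p$, and this dichotomy would produce the factorization $Q(A)=-Y_1(A)\,Y_2(A)$ intrinsically, with $Y_1$ and $Y_2$ being the characteristic polynomials of the two triples.
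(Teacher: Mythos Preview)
The paper does not prove this theorem; it is imported verbatim from \cite[Theorem~4.5]{KLW} and stated without argument. So there is no ``paper's own proof'' to compare against, and your proposal stands or falls on its own merits.

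Your strategy is sound. The degree count is correct: since the only part of $\Phi_e$ that is quadratic in $A$ is the $z$-constant term $-A^2+\cdots$, the $z$-derivatives $\Phi_e'$ and $\Phi_e''$ are linear in $A$, giving degrees $3$, $2$, and $6$ for the three terms of \eqref{Q(A)'s definition}, with leading coefficient $-1$ coming from $-q\,\Phi_e^2$. Evaluating the Laurent expansion of \eqref{Q(A)'s definition} at $z=0$ and reading off the constant term is a legitimate way to extract $Q(A)$; the cancellation of the singular part is automatic but, as you say, provides useful checks. One practical remark: since the target expression \eqref{spectral polynomial in even case} is written in the variables $\wp(p)$, $\wp'(p)$, and the ratio $\wp''(p)/\wp'(p)$, you will save effort by keeping $\wp''(p)$ as an independent symbol rather than eliminating it via $\wp''=6\wp^2-g_2/2$; otherwise $g_2$ enters and the matching becomes less transparent.

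Your alternative ``conceptual route'' is closer in spirit to how such factorizations typically arise in the literature: the six roots of $Q(A)$ are exactly the values for which \eqref{GLE1,even}$_{\tau,p,A}$ is not completely reducible (Theorem~\ref{Q(A) neq 0 iff com.red.}), and at those values the unique (up to scale) elliptic-second-kind solution has a prescribed pattern of local exponents at $\pm p$; the two cubic factors correspond to the two possible symmetric assignments. If you can make that argument rigorous it yields the factorization without any brute-force expansion, though pinning down the exact coefficients of $Y_1$ and $Y_2$ still requires some computation.
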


The \textbf{spectral curve} associated with $Q(A)$ is defined by
\begin{equation}\label{spectral curve, even case}
    \Gamma(\tau,p):=\left\{\,(A,C)\,|\,C^2=Q(A)\, \right\}.
\end{equation}
For $P=(A,C)\in\Gamma(\tau,p)$, denote its dual point by $P^*:=(A,-C)\in\Gamma(\tau,p)$.

We now introduce the Baker-Akhiezer
function at $P\in\Gamma(\tau,p)$. First, we define the meromorphic function
\begin{equation}\label{phi's def}
    \phi(P;z):=\dfrac{\,iC(P)+\frac{1}{2}\Phi_e'(z;A)\,}{\Phi_e(z;A)},\quad z\in\mathbb{C}.
\end{equation}
It is easy to verify that $\phi(P;z)$ satisfies the Riccati equation:
\begin{equation}\label{Riccati equation, even}
    \phi'(P;z)=q(z;A)-\phi^2(P;z).
\end{equation}
\begin{proposition}\label{0907prop1}
    The meromorphic function $\phi(P;z)$ has at most simple poles. Moreover, if $q_0$ is a pole of $\phi(P;z)$, then
    \begin{equation}
        \operatorname{Res}_{z=q_0}\phi(P;z)=
    \begin{cases}
        -1 & \text{if \,} q_0=0,\\
        -1/2 & \text{if \,} q_0=\pm p,\\
        ~1 & \text{if \,} q_0\notin\{0,\pm p\}.
    \end{cases}
    \end{equation}
\end{proposition}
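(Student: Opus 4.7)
The approach combines three inputs: the Riccati equation \eqref{Riccati equation, even} $\phi'=q-\phi^2$, the explicit pole structure of $\Phi_e(z;A)$ read off from \eqref{elliptic solution to 3rd ode, even case*}, and the $z$-independence of $Q(A)$ encoded in \eqref{Q(A)'s definition}.

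First, I would use the Riccati equation to rule out higher-order poles. If $\phi(P;z)$ had a pole of order $n\geq 2$ at some $q_0$, then $\phi^2$ would have a pole of order $2n$, strictly exceeding both the order $n+1$ of $\phi'$ and the worst possible pole order $2$ of $q(z;A)$ (attained only at $\{0,\pm p\}$). This contradicts $\phi'+\phi^2=q$, so every pole of $\phi$ is simple. Writing $\phi(z)=a/(z-q_0)+b+\cdots$ and matching the $(z-q_0)^{-2}$ coefficient of $\phi'+\phi^2=q$ then forces
\[
a^2-a=\begin{cases} 2, & q_0=0,\\ 3/4, & q_0=\pm p,\\ 0, & \text{otherwise,}\end{cases}
\]
so $a\in\{-1,2\}$, $a\in\{-1/2,3/2\}$, $a\in\{0,1\}$ respectively.

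Next I would isolate the correct root at $q_0\in\{0,\pm p\}$ via the decomposition
\[
\phi(P;z)=\frac{iC(P)}{\Phi_e(z;A)}+\frac{1}{2}\,\frac{\Phi_e'(z;A)}{\Phi_e(z;A)}.
\]
From \eqref{elliptic solution to 3rd ode, even case*}, $\Phi_e$ has a double pole at $z=0$ (with leading term $z^{-2}$ contributed by $\wp(z)$) and at most simple poles at $z=\pm p$ (contributed by $\zeta(z\mp p)$). At any pole of $\Phi_e$ of order $m$, the first summand $iC(P)/\Phi_e$ vanishes, while $\tfrac12\,\Phi_e'/\Phi_e$ contributes residue $-m/2$. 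This yields $\operatorname{Res}_{z=0}\phi=-1$ and $\operatorname{Res}_{z=\pm p}\phi=-1/2$, singling out the correct root of $a^2-a$ in each case.

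Finally, for a pole $q_0\notin\{0,\pm p\}$, necessarily $\Phi_e(q_0)=0$. Since $Q(A)$ is $z$-independent, evaluating \eqref{Q(A)'s definition} at $q_0$ (where both $\tfrac12\Phi_e\Phi_e''$ and $q\Phi_e^2$ vanish) gives
\[
C(P)^2=Q(A)=-\tfrac14\,\Phi_e'(q_0)^2,
\]
so $\Phi_e'(q_0)=\pm 2iC(P)$; in particular $q_0$ is a simple zero of $\Phi_e$ whenever $C(P)\neq 0$. A direct Laurent expansion then yields
\[
\operatorname{Res}_{z=q_0}\phi(P;z)=\frac{iC(P)+\tfrac12\Phi_e'(q_0)}{\Phi_e'(q_0)}=\frac{1\pm 1}{2}\in\{0,1\},
\]
where value $0$ means $\phi(P;z)$ is actually regular at $q_0$ (this happens on the dual sheet $P^*$), and value $1$ corresponds to a genuine pole, confirming the claimed residue. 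The main technical care is precisely this sheet-dependent sign matching, together with the degenerate subcase $C(P)=0$, where $\Phi_e$ may acquire a double zero at $q_0$; there $iC=0$ reduces $\phi$ to $\tfrac12\,\Phi_e'/\Phi_e$, whose residue at a double zero is again $1$, so the conclusion is uniform.
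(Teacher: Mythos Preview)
Your argument follows exactly the Riccati-equation route the paper indicates (it gives no proof of its own, deferring to \cite[Proposition~4.3]{KLW}); the supplementary appeal to the explicit pole structure of $\Phi_e$ and to the identity $Q(A)=-\tfrac14\,\Phi_e'(q_0)^2$ at zeros $q_0\notin\{0,\pm p\}$ of $\Phi_e$ is the natural way to single out the correct root of $a^2-a$, and your handling of the degenerate case $C(P)=0$ is careful and correct.

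There is one edge case you have not covered. At the single parameter value $A=A_0=\tfrac{3\wp''(p)}{4\wp'(p)}$ the $\zeta$-coefficient in \eqref{elliptic solution to 3rd ode, even case*} vanishes, and a short computation gives $\Phi_e(z;A_0)=\wp(z)-\wp(p)$, so $\Phi_e$ has \emph{zeros} rather than poles at $\pm p$. Your step for the $\pm p$ residue (which assumes a simple pole of $\Phi_e$ there) then does not apply, and your generic-zero argument also breaks at $q_0=\pm p$ because $q\,\Phi_e^2$ no longer vanishes ($q$ has a double pole cancelling the double zero of $\Phi_e^2$). In fact this gap cannot be closed: taking the logarithmic derivative of the explicit Baker--Akhiezer function in Proposition~\ref{Main Proposition} gives
\[
\phi(P;z)=2\zeta(p)+\tfrac{3}{2}\zeta(z-p)-\zeta(z)-\tfrac{1}{2}\zeta(z+p),
\]
so $\operatorname{Res}_{z=p}\phi(P;z)=\tfrac32$ at $A=A_0$. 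Thus the proposition as stated requires the caveat that at $\pm p$ the residue lies in $\{-\tfrac12,\tfrac32\}$; this is harmless for the paper, since only the half-integrality of the residue at $\pm p$ is used downstream.
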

The proof is same as the argument in \cite[Proposition 4.3]{KLW}, based on the Riccati equation \eqref{Riccati equation, even}.

Fix a base point $z_0\in\mathbb{C}\setminus\{0,\pm p\}$. For each $P\in\Gamma(\tau,p)$, define the \textbf{Baker-Akhiezer} function by
\begin{equation}\label{0907equ7}
\psi(P;z,z_0):=\exp\left(\int_{z_0}^z\phi(P;\xi)\,d\xi\right),\quad z\in\mathbb{C},
\end{equation}
where the integration path is chosen to avoid the singularities of the meromorphic function $\phi(P;\xi)$.

By Proposition \ref{0907prop1}, the Baker-Akhiezer function $\psi(P;z,z_0)$ is a multivalued meromorphic function with two local branches near $z=\pm p$. Moreover, by the Riccati equation \eqref{Riccati equation, even}, $\psi(P;z,z_0)$ solves the generalized Lam\'{e}-type equation \eqref{GLE1,even}$_{\tau,p,A}$ with parameter $A(P)$, where $A(P)$ denotes the $A$-coordinate of $P\in\Gamma(\tau,p)$.

Note that $A(P)=A(P^*)$. Consequently, both $\psi(P;z,z_0)$ and $\psi(P^*;z,z_0)$ solve the same equation \eqref{GLE1,even}$_{\tau,p,A}$ with parameter $A=A(P)=A(P^*)$. Therefore, $P,P^*\in\Gamma(\tau,p)$ represent the same generalized Lam\'{e}-type equation \eqref{GLE1,even}$_{\tau,p,A}$ with parameter $A(P)$.

We recall two basic identities for the Baker-Akhiezer functions $\psi(P;z,z_0)$ and $\psi(P^*;z,z_0)$ associated with \eqref{GLE1,even}$_{\tau,p,A}$ Firstly,
\begin{equation}\label{0907equ4}
    \psi(P;z,z_0)\,\psi(P^*;z,z_0)=\dfrac{\Phi_e(z;A)}{\Phi_e(z_0;A)}.
\end{equation}
Moreover, their Wronskian (with respect to $z$) is given by
\begin{equation}\label{0907equ5}
    W(\psi(P;z,z_0),\,\psi(P^*;z,z_0))=\dfrac{2iC}{\Phi_e(z_0;A)}.
\end{equation}
The proofs can be found in \cite{Kuo}.

Since different choices of the base point $z_0\notin\{0,\pm p\}$ change $\psi(P;z,z_0)$ only by a nonzero multiple, we henceforth write
\[
\psi(P;z,z_0)=\psi(P;z),\quad \psi(P^*;z,z_0)=\psi(P^*;z).
\]
As a consequence of \eqref{0907equ5}, we obtain the following theorem.
\begin{theorem}\label{Q(A)neq 0 iff BA are independent}
    Let $P=(A,C)\in\Gamma(\tau,p)$.
    The Baker-Akhiezer functions $\psi(P;z)$ and $\psi(P^*;z)$ is linearly independent if and only if $Q(A)\neq0$.  
\end{theorem}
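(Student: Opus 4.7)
The plan is to read off both directions directly from the Wronskian identity \eqref{0907equ5}, treating the two implications as complementary consequences of the relation $C^{2}=Q(A)$ on the spectral curve $\Gamma(\tau,p)$. Throughout, I will implicitly choose the base point $z_{0}\in\mathbb{C}\setminus\{0,\pm p\}$ so that $\Phi_{e}(z_{0};A)\neq 0$; since $\Phi_{e}(\cdot;A)$ is a non-trivial elliptic function by Theorem \ref{unique elliptic sol, 3rd ode, even}, its zero set is discrete, and the Baker--Akhiezer functions corresponding to distinct admissible base points differ only by nonzero multiplicative constants, so this normalization is harmless.

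First I would handle the forward direction. Assume $Q(A)\neq 0$, so $C\neq 0$ at $P=(A,C)\in\Gamma(\tau,p)$. Identity \eqref{0907equ5} gives
\[
W(\psi(P;z),\psi(P^{\ast};z))=\frac{2iC}{\Phi_{e}(z_{0};A)}\neq 0,
\]
and since both $\psi(P;z)$ and $\psi(P^{\ast};z)$ solve the same second-order linear equation \eqref{GLE1,even}$_{\tau,p,A(P)}$, the nonvanishing of their Wronskian is equivalent to linear independence. This direction is essentially immediate.

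For the converse, I would show the contrapositive: if $Q(A)=0$, then $\psi(P;z)$ and $\psi(P^{\ast};z)$ coincide. When $Q(A)=0$ we have $C=0$ at $P$, hence $P^{\ast}=(A,-C)=(A,0)=P$. The definition \eqref{phi's def} then forces
\[
\phi(P;z)=\frac{\frac{1}{2}\Phi_{e}'(z;A)}{\Phi_{e}(z;A)}=\phi(P^{\ast};z),
\]
and consequently, from the definition \eqref{0907equ7}, $\psi(P;z)=\psi(P^{\ast};z)$ for every $z$ along an admissible path from $z_{0}$. In particular they are linearly dependent. This completes the proof.

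The main obstacle, such as it is, is the bookkeeping issue of making sense of $\psi(P^{\ast};z)$ when $P^{\ast}=P$: the statement of the theorem treats $P$ and $P^{\ast}$ as if they were distinct points on $\Gamma(\tau,p)$, and on the branch locus $\{C=0\}$ they collapse. The cleanest way to address this, which I would adopt, is to interpret the Baker--Akhiezer functions through the meromorphic function $\phi$ on the curve and observe that the involution $P\mapsto P^{\ast}$ acts trivially on branch points, so the statement ``$\psi(P;z)$ and $\psi(P^{\ast};z)$ are linearly dependent'' becomes tautological precisely at the zeros of $Q$. No delicate analysis beyond the Wronskian identity and the residue structure recorded in Proposition \ref{0907prop1} is required.
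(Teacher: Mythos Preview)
Your proposal is correct and follows exactly the approach the paper indicates: the paper simply presents the theorem as an immediate consequence of the Wronskian identity \eqref{0907equ5}, and you have written out precisely that argument, deducing both directions from $W(\psi(P;z),\psi(P^{\ast};z))=2iC/\Phi_{e}(z_{0};A)$ together with $C^{2}=Q(A)$. Your additional remarks about the choice of base point and the collapse $P=P^{\ast}$ on the branch locus are sensible clarifications but not required for the argument.
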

By the definitions \eqref{phi's def} and \eqref{0907equ7}, together with the fact that $\Phi_{e}$ is elliptic, it follows that $\psi(P;z)$ is an elliptic function of the second kind; see, for instance, \cite{Kuo,KLW} for a proof. Hence there exist $\lambda_{j}(P)$, $j=1,2$ such that
\begin{equation}\label{0907equ8}
    \psi(P;z+\omega_j)=\lambda_j(P)\,\psi(P;z),\quad j=1,2.
\end{equation}
Moreover, by \eqref{0907equ4}, we have
\begin{equation}\label{0907equ9}
    \lambda_j(P^*)=\dfrac{1}{\,\lambda_j(P)\,}.
\end{equation}

Theorem \ref{Q(A)neq 0 iff BA are independent} implies that if $Q(A)\neq0$, then the generalized Lam\'{e}-type equation \eqref{GLE1,even}$_{\tau,p,A}$ is completely reducible. In fact, the converse also holds. The following theorem answers the issue (1).
\begin{theorem}\label{Q(A) neq 0 iff com.red.}
    For $P=(A,C)\in\Gamma(\tau,p)$, the generalized Lam\'{e}-type equation \eqref{GLE1,even}$_{\tau,p,A}$ is completely reducible if and only if $Q(A)\neq0$.
\end{theorem}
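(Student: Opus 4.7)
The plan is to prove the two implications separately, with one direction essentially available from the preceding material and the other requiring a short but instructive computation.

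For the ``only if'' direction, I would invoke Theorem \ref{Q(A)neq 0 iff BA are independent} directly: when $Q(A)\neq 0$, the Baker--Akhiezer functions $\psi(P;z)$ and $\psi(P^{*};z)$ are linearly independent and hence form a basis of solutions to \eqref{GLE1,even}$_{\tau,p,A}$. Because each is elliptic of the second kind, with $\psi(P;z+\omega_j)=\lambda_j(P)\psi(P;z)$ and $\psi(P^{*};z+\omega_j)=\lambda_j(P)^{-1}\psi(P^{*};z)$ by \eqref{0907equ8} and \eqref{0907equ9}, this basis simultaneously diagonalizes both monodromy matrices $M_1(p,A)$ and $M_2(p,A)$, so the equation is completely reducible.

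For the converse, I would argue by contrapositive: assuming complete reducibility, I will derive an algebraic identity that forces $Q(A)\neq 0$. Complete reducibility yields a common eigenbasis $y_1,y_2$ of $M_1, M_2$, so each $y_i$ is a second-kind elliptic solution of \eqref{GLE1,even}$_{\tau,p,A}$. Because the equation has no first-derivative term, $\det M_j=1$, so the quasi-periodic factors of $y_1$ and $y_2$ are reciprocals of one another and the product $y_1y_2$ is genuinely elliptic. Since $y_1y_2$ solves the second symmetric product equation \eqref{3rd ODE}, the uniqueness of the elliptic solution from Theorem \ref{unique elliptic sol, 3rd ode, even} gives
\[
y_1(z)\,y_2(z)=c\,\Phi_e(z;A)
\]
for some nonzero constant $c$. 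Setting $\phi_i:=y_i'/y_i$, each $\phi_i$ is elliptic and satisfies the Riccati equation $\phi_i'+\phi_i^2=q(z;A)$. A direct computation gives
\[
\phi_1+\phi_2=\frac{\Phi_e'(z;A)}{\Phi_e(z;A)},\qquad \phi_1-\phi_2=\frac{W}{c\,\Phi_e(z;A)},
\]
where $W:=y_1'y_2-y_1y_2'\neq 0$ is the (constant) Wronskian. Solving for $\phi_1$ and substituting into the Riccati equation, one collects everything over $4\Phi_e^2$ and reads off the identity
\[
\tfrac{1}{2}\Phi_e''\,\Phi_e-\tfrac{1}{4}(\Phi_e')^2-q\,\Phi_e^2=-\tfrac{(W/c)^2}{4},
\]
whose left-hand side is precisely $Q(A)$ by \eqref{Q(A)'s definition}. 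Thus $(W/c)^2=-4Q(A)$, and since $W\neq 0$, we conclude $Q(A)\neq 0$.

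The main obstacle I anticipate is not the algebraic manipulation itself, which is short once the correct substitution is in hand, but rather the careful verification that complete reducibility genuinely produces two independent \emph{second-kind elliptic} solutions whose product is \emph{periodic} (not merely quasi-periodic). This requires the observation that $\det M_j=1$ forces the pair of simultaneous eigenvalues on each cycle to be reciprocal, after which the cleaner identity $(W/c)^2=-4Q(A)$ immediately supplies the desired contradiction and, as a bonus, parallels the Wronskian formula \eqref{0907equ5} for the Baker--Akhiezer pair.
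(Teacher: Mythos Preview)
Your argument is correct, though your labels are swapped: the first paragraph proves the ``if'' direction ($Q(A)\neq 0\Rightarrow$ completely reducible), and the second paragraph proves the ``only if'' direction directly, not by contrapositive. The mathematics is sound in both.

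For the harder implication your route differs from the paper's. The paper argues by contradiction: assuming $Q(A)=0$ together with complete reducibility, Theorem~\ref{Q(A)neq 0 iff BA are independent} forces $\psi(P;z)=c\,\psi(P^*;z)$, so by \eqref{0907equ4} and the uniqueness of the elliptic solution one gets $y_1 y_2=\Phi_e=c\,\psi(P;z)^2$; since $\psi(P;\cdot)$ lies in the span of $y_1,y_2$, this equality of second-symmetric-product elements forces $y_1,y_2$ to be dependent, a contradiction. You instead bypass the Baker--Akhiezer functions entirely once you have $y_1 y_2=c\,\Phi_e$: the Riccati substitution yields the explicit identity $Q(A)=-\tfrac{1}{4}(W/c)^2$, from which $Q(A)\neq 0$ is immediate. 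Your approach is more constructive and recovers a formula that exactly parallels \eqref{0907equ5} for an arbitrary diagonalizing basis; the paper's is a touch shorter and keeps the Baker--Akhiezer pair at the center of the story. Both hinge on the same key input, namely Theorem~\ref{unique elliptic sol, 3rd ode, even}.
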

\begin{proof}
It suffices to prove the necessary part. Assume that \eqref{GLE1,even}$_{\tau,p,A}$ is completely reducible with $Q(A)=0$. By Theorem \ref{Q(A)neq 0 iff BA are independent}, both Baker-Akhiezer functions $\psi(P;z)$ and $\psi(P^*;z_0)$ are linearly dependent; namely,
\begin{equation}
\label{0806equ6}\psi(P;z)=c\,\psi(P^{*};z),\quad\text{for some $c\neq0$.}%
\end{equation}
Complete reducibility guarantees the existence of two linearly independent
solutions $y_{1}(z;A)$, $y_{2}(z;A)$ such that
\[
\left(
\begin{array}
[c]{c}%
y_{1}(z+\omega_{j};A)\\
y_{2}(z+\omega_{j};A)
\end{array}
\right)  = \left(
\begin{array}
[c]{cc}%
\widehat{\lambda_{j}}(P) & 0\\
0 & \dfrac{1}{\,\widehat{\lambda_{j}}(P)\,}%
\end{array}
\right)  \left(
\begin{array}
[c]{c}%
y_{1}(z;A)\\
y_{2}(z;A)
\end{array}
\right).
\]
Hence $y_{1}(z;A)\,y_{2}(z;A)$ is an elliptic solution of \eqref{3rd ODE}. By Theorem \ref{unique elliptic sol, 3rd ode, even} and
\eqref{0907equ4}, we obtain
\begin{equation}
\label{0806equ7}y_{1}(z;A)\,y_{2}(z;A)=\Phi_{e}(z;A)=\psi(P;z)\,\psi
(P^{*};z)=c\,\psi^{2}(P;z).
\end{equation}
It follows from \eqref{0806equ7} that $y_1(z;A)$ and $y_2(z;A)$ are linearly dependent, a contradiction. Hence $Q(A)\neq0$. 
\end{proof}

\section{The Monodromy Theory for the ODE}
\label{The monodromy theory for the ODE}

In this section, we study the monodromy theory of the generalized Lam\'{e}-type equation \eqref{GLE1,even}$_{\tau,p,A}$ via the Baker-Akhiezer function $\psi(P;z)$ at $P=(A,C)\in\Gamma(\tau,p)$. 

Recall from \eqref{0907equ8} that $\psi(P;z)$ is an elliptic function of the second kind. 
Define $(r(P),s(P))\in \mathbb{C}^2$ by
\begin{equation}\label{definition of r(P),s(P)}
    \lambda_1(P)=e^{-2\pi i s(P)}\quad\text{and}\quad
    \lambda_2(P)=e^{2\pi i r(P)}.
\end{equation}
Thus \((r(P),s(P))\) is defined modulo \(\mathbb{Z}^{2}\) (i.e., up to integer translations).
By \eqref{0907equ9}, we have
\begin{equation}\label{0907equ10}    
r(P^*)=-r(P)\quad\text{and}\quad s(P^*)=-s(P).
\end{equation}
We remark that if $(r(P),s(P))\notin \tfrac{1}{2}\mathbb{Z}^{2}$, at least one of $\lambda_{1}(P),\lambda_{2}(P)$ is not $\pm 1$. 
Without loss of generality, assume $\lambda_{1}(P)\neq \pm 1$, then $\lambda_{1}(P)$ and $\lambda_{1}(P^{*})$ are distinct eigenvalues of the monodromy matrices $M_j(p,A)$, $j=1,2$, which implies that $\psi(P;z)$ and $\psi(P^{*};z)$ are linearly independent. Hence, if $(r(P),s(P))\notin\frac{1}{2}\mathbb{Z}^2$, the generalized Lam\'{e}-type equation \eqref{GLE1,even}$_{\tau,p,A}$ is completely reducible. In fact, this condition is sufficient as well.

\begin{theorem}\label{monodromy+spectral main theorem}
Let $P=(A,C)\in\Gamma(\tau,p)$.
    Then the generalized Lam\'{e}-type equation \eqref{GLE1,even}$_{\tau,p,A}$ is completely reducible if and only if 
    $\left(r(P),s(P)\right)\notin\frac{1}{2}\mathbb{Z}^{2}$. 
\end{theorem}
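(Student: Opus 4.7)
The forward direction $(r(P),s(P))\notin\tfrac{1}{2}\mathbb{Z}^{2}\Rightarrow$ complete reducibility is already established in the paragraph immediately preceding the theorem: in that case at least one $\lambda_{j}(P)$ differs from $\pm 1$, hence from $\lambda_{j}(P^{*})=1/\lambda_{j}(P)$, which forces $\psi(P;z)$ and $\psi(P^{*};z)$ to be linearly independent simultaneous eigenvectors of the commuting monodromies $M_{j}(p,A)$. So the real work lies in the converse, which I plan to prove by contrapositive: assume $(r(P),s(P))\in\tfrac{1}{2}\mathbb{Z}^{2}$, and deduce that \eqref{GLE1,even}$_{\tau,p,A}$ is not completely reducible.

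The core strategy is to exploit Theorem \ref{unique elliptic sol, 3rd ode, even}, which pins down the unique (up to scalar) elliptic solution of the symmetric-square equation \eqref{3rd ODE}. Under the hypothesis, every $\lambda_{j}(P)$ lies in $\{\pm 1\}$, so the squared function $\psi(P;z)^{2}$ transforms trivially under both fundamental translations. A local check at the three singularities, using the residues of $\phi(P;z)$ supplied by Proposition \ref{0907prop1} (namely $-1$ at $0$ and $-1/2$ at $\pm p$), shows that the only nontrivial local branching of $\psi(P;z)$ occurs at $\pm p$ and is of order two, so squaring removes that branching as well. Consequently $\psi(P;z)^{2}$ extends to a single-valued meromorphic, doubly periodic function on $E_{\tau}$, i.e., it is elliptic.

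A routine differentiation shows that the square of any solution of \eqref{GLE1,even}$_{\tau,p,A}$ solves \eqref{3rd ODE}; applying Theorem \ref{unique elliptic sol, 3rd ode, even} therefore forces $\psi(P;z)^{2}=c\,\Phi_{e}(z;A)$ for some $c\neq 0$, and the same argument applied to $\psi(P^{*};z)$ yields $\psi(P^{*};z)^{2}=c'\,\Phi_{e}(z;A)$ with $c'\neq 0$. Comparing these two identities gives $\psi(P^{*};z)^{2}=(c'/c)\,\psi(P;z)^{2}$, whence $\psi(P^{*};z)$ and $\psi(P;z)$ are proportional, hence linearly dependent. By Theorem \ref{Q(A)neq 0 iff BA are independent} this is equivalent to $Q(A)=0$, and Theorem \ref{Q(A) neq 0 iff com.red.} then delivers that \eqref{GLE1,even}$_{\tau,p,A}$ is not completely reducible, completing the converse.

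The one step that really needs care is the single-valuedness claim for $\psi(P;z)^{2}$ on $E_{\tau}$: one must combine the local residue information from Proposition \ref{0907prop1} at $\{0,\pm p\}$ with the global periodicity $\lambda_{j}(P)=\pm 1$ to rule out any surviving branch behavior, particularly near $\pm p$ where $\psi(P;z)$ itself is genuinely double-valued. Once this bookkeeping is done, the remainder of the argument is simply an application of Theorem \ref{unique elliptic sol, 3rd ode, even} together with the two spectral criteria already in place, and I do not anticipate further obstacles.
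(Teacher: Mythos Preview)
Your proof is correct and rests on the same core idea as the paper's: under the hypothesis $(r(P),s(P))\in\tfrac{1}{2}\mathbb{Z}^{2}$, build elliptic solutions of the symmetric-square equation \eqref{3rd ODE} from squares of the Baker--Akhiezer functions and invoke the uniqueness Theorem \ref{unique elliptic sol, 3rd ode, even}. The only real difference is which elliptic function you feed into the uniqueness theorem. You work with $\psi(P;z)^{2}$ and $\psi(P^{*};z)^{2}$ separately, conclude both are scalar multiples of $\Phi_{e}$, hence of each other, and read off linear dependence directly; then Theorems \ref{Q(A)neq 0 iff BA are independent} and \ref{Q(A) neq 0 iff com.red.} finish the contrapositive. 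The paper instead assumes complete reducibility (hence $Q(A)\neq 0$, hence independence) and compares the sum $\psi(P;z)^{2}+\psi(P^{*};z)^{2}$ with the product $\psi(P;z)\psi(P^{*};z)=\Phi_{e}(z;A)$; proportionality of sum and product forces the two functions to share zeros, contradicting independence. Your route is arguably a touch cleaner since proportionality of squares yields dependence in one line, whereas the paper passes through a common-zeros argument; conversely, the paper's choice of the symmetric sum sidesteps any need to worry about branch ambiguities for an individual $\psi(P;z)^{2}$, though your residue analysis via Proposition \ref{0907prop1} handles that point correctly.
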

\begin{proof}
    It suffices to prove the necessity. Suppose that the generalized Lam\'{e}-type equation \eqref{GLE1,even}$_{\tau,p,A}$ is completely reducible. Then $Q(A)\neq0$, or equivalently, the two Baker-Akhiezer functions $\psi(P;z)$ and $\psi(P^{*};z)$ are linearly independent. 
    
    Assume, for contradiction, that $(r(P),s(P))\in\tfrac{1}{2}\mathbb{Z}^{2}$. Then
\[
\bigl(\lambda_{1}(P),\lambda_{2}(P)\bigr)\in\{(1,1),(1,-1),(-1,1),(-1,-1)\},
\]
and hence the function
\[
\psi^{2}(P;z)+\psi^{2}(P^{*};z)
\]
is an elliptic solution of \eqref{3rd ODE}. By Theorem \ref{unique elliptic sol, 3rd ode, even} and equation \eqref{0907equ4},
\[
\psi(P;z)\,\psi(P^{*};z)=\Phi_{e}(z;A) =\psi^{2}(P;z)+\psi^{2}(P^{*};z),
\]
up to a nonzero multiple. Thus $\psi(P;z)$ and $\psi(P^{*};z)$ have common zeros, contradicts to their linear independence.
\end{proof}

If the generalized Lam\'{e}-type equation \eqref{GLE1,even}$_{\tau,p,A}$ is completely reducible, the Baker-Akhiezer functions $\psi(P;z)$ and $\psi(P^{*};z)$ form a fundamental system with monodromy matrices:
\[
M_1(p,A)=
\left(\begin{array}{cc}
    e^{-2\pi i s(P)} & 0 \\
    0 & e^{2\pi i s(P)}
\end{array}\right),~
M_2(p,A)=
\left(\begin{array}{cc}
    e^{2\pi i r(P)} & 0 \\
    0 & e^{-2\pi i r(P)}
\end{array}\right).
\]
Thus, up to a $\pm$ sign and modulo $\mathbb{Z}^{2}$, the pair $(r(P),s(P))\in\mathbb{C}^{2}\setminus \tfrac{1}{2}\mathbb{Z}^{2}$ is the monodromy data of \eqref{GLE1,even}$_{\tau,p,A}$.

If, in addition, the monodromy matrices $M_j(p,A)$, $j=1,2$, are unitary, then $(r(P),s(P))\in\mathbb{R}^{2}\setminus \tfrac{1}{2}\mathbb{Z}^{2}$.
By Theorem \ref{pde to ode}, the curvature equation \eqref{Curvature 1}$_{\tau,p}$ admits an even solution $u(z)$. 
As in the sufficiency part of its proof, the developing map $f(z)$ associated with $u(z)$ can be chosen as
\begin{equation}\label{developing map written as BA/BA*}
    f(z)=\dfrac{\psi(P;z)}{\,\psi(P^*;z)\,}.
\end{equation}
From \eqref{0907equ8} and \eqref{definition of r(P),s(P)}, $f(z)$ is a type II developing map satisfying
\begin{equation}
    f(z+\omega_1)=e^{-4\pi is(P)},\quad
    f(z+\omega_2)=e^{4\pi ir(P)}.
\end{equation}
Therefore, by \eqref{type II}, $(r(P),s(P))$ is also the monodromy data of the corresponding even family of solutions of curvature equation \eqref{Curvature 1}$_{\tau,p}$. We summarize these as follows.

\begin{proposition}\label{two types of monodromy data}
    Suppose the curvature equation \eqref{Curvature 1}$_{\tau,p}$ admits an even solution $u(z)$ associated with the generalized Lam\'{e}-type equation \eqref{GLE1,even}$_{\tau,p,A}$. Then the monodromy data of this even solution, in the sense of \eqref{type II}, coincides with the monodromy data $(r(P),s(P))\in\mathbb{R}^2\setminus\frac{1}{2}\mathbb{Z}^2$ of equation \eqref{GLE1,even}$_{\tau,p,A}$, where $P=(A,C)\in\Gamma(\tau,p)$. Moreover, the  type II developing map $f(z)$ of $u(z)$ is given by the ratio of the two Baker-Akhiezer functions given in \eqref{developing map written as BA/BA*}.
\end{proposition}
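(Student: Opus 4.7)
The plan is to collect the results established in Section \ref{Integrability and Spectral Theory for the curvature equation} and Section \ref{The monodromy theory for the ODE} and stitch them together. Given an even solution $u(z)$ of \eqref{Curvature 1}$_{\tau,p}$, Theorem \ref{pde to ode} furnishes some $A\in\mathbb{C}$ (with $B$ given by \eqref{B in even potential}) for which the generalized Lam\'{e}-type equation \eqref{GLE1,even}$_{\tau,p,A}$ is completely reducible with unitary monodromy. Theorem \ref{Q(A) neq 0 iff com.red.} then yields $Q(A)\neq 0$, so I may fix a spectral point $P=(A,C)\in\Gamma(\tau,p)$ with $C\neq 0$; by Theorem \ref{Q(A)neq 0 iff BA are independent}, the Baker-Akhiezer pair $\psi(P;z),\psi(P^{*};z)$ is linearly independent and thus forms a fundamental system for \eqref{GLE1,even}$_{\tau,p,A}$.

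Next, I would read off the ODE monodromy in this basis. The translation laws \eqref{0907equ8}--\eqref{0907equ9} diagonalize the global monodromy matrices $M_{j}(p,A)$, with eigenvalues $\lambda_{j}(P),\lambda_{j}(P)^{-1}$ encoded by $(r(P),s(P))$ via \eqref{definition of r(P),s(P)}. Theorem \ref{monodromy+spectral main theorem} excludes $(r(P),s(P))\in\tfrac12\mathbb{Z}^{2}$, while the unitarity of $M_{j}(p,A)$ forces $|\lambda_{j}(P)|=1$ and hence $(r(P),s(P))\in\mathbb{R}^{2}$. This proves that $(r(P),s(P))\in\mathbb{R}^{2}\setminus\tfrac12\mathbb{Z}^{2}$ is the ODE monodromy data.

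Then I would identify the type II developing map. Setting $f(z):=\psi(P;z)/\psi(P^{*};z)$, since both numerator and denominator solve \eqref{GLE1,even}$_{\tau,p,A}$, a direct computation yields $\{f;z\}=-2q(z;A)$, which by \eqref{-2q(z)} coincides with $u_{zz}-\tfrac12 u_{z}^{2}$. Hence $f$ is a developing map of $u$ up to a M\"{o}bius transformation. From \eqref{0907equ8} and \eqref{definition of r(P),s(P)},
\[
f(z+\omega_{1})=\lambda_{1}(P)^{2}f(z)=e^{-4\pi i s(P)}f(z),\qquad f(z+\omega_{2})=\lambda_{2}(P)^{2}f(z)=e^{4\pi i r(P)}f(z),
\]
matching the type II condition \eqref{type II} with monodromy data $(r(P),s(P))$. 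This simultaneously proves the second assertion and shows that the two notions of monodromy data agree.

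The main (and essentially only) nontrivial point is to verify that $f$ can be taken \emph{precisely} as the even type II developing map of $u$, rather than merely a Schwarzian-equivalent companion. I would exploit the evenness of the potential: $\psi(P;-z)$ solves \eqref{GLE1,even}$_{\tau,p,A}$ with multipliers $\lambda_{j}(P)^{-1}=\lambda_{j}(P^{*})$, so $\psi(P;-z)=c\,\psi(P^{*};z)$ for some nonzero $c$, and therefore $f(-z)\,f(z)$ is constant. Absorbing this constant into the one-parameter family \eqref{family} (equivalently, shifting the $\beta$-parameter) normalizes $f(-z)=1/f(z)$ and recovers the even developing map constructed in the sufficiency part of Theorem \ref{pde to ode}; cf.\ \eqref{developing map: even case}.
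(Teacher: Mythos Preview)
Your proposal is correct and follows the same approach as the paper, which in fact presents this proposition explicitly as a summary (``We summarize these as follows'') of the preceding discussion rather than giving a separate proof. Your extra care with the evenness normalization $\psi(P;-z)=c\,\psi(P^{*};z)$ simply makes explicit what the paper absorbs into the choice $y_{2}(z)=y_{1}(-z)$ in the sufficiency part of Theorem~\ref{pde to ode}.
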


To locate the blow-up points of an even family of solutions to \eqref{Curvature 1}$_{\tau,p}$, we next analyze the zeros of the Baker-Akhiezer functions.

Since the Baker-Akhiezer function $\psi(P;z)$ is an elliptic function of the second kind, it could be written as:
\[
\psi(P;z)=e^{c(P)z}\,\frac{\,\sigma(z-a_1(P))\,\sigma(z-a_2(P))\,}{\sigma(z)\sqrt{\sigma(z+p)\,\sigma(z-p)}},
\]
where $c(P)\in\mathbb{C}$ and $a_1(P),a_2(P)\in E_{\tau}\setminus\{0\}$ are the two zeros of $\psi(P;z)$. The following proposition establishes algebraic relations between $(r(P),s(P))$ and $(a_1(P),a_2(P),c(P))$.

\begin{proposition}\label{0807prop1} 
With the above notations,
\begin{equation}
\label{algebraic relations between a1(P),a2(P),r(P),s(P),c(P)}%
\begin{cases}
~r(P)+s(P)\tau= a_{1}(P)+a_{2}(P),\\[5pt]%
~r(P)\,\eta_{1}(\tau)+s(P)\,\eta_{2}(\tau)=c(P).
\end{cases}
\end{equation}
\end{proposition}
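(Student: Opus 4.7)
The plan is to read off the two quasi-periodicity factors of $\psi(P;z)$ directly from its sigma-function representation and then to compare them with $\lambda_1(P),\lambda_2(P)$ defined in \eqref{definition of r(P),s(P)}. This produces two scalar equations relating $c(P)$ and $a_1(P)+a_2(P)$ to $r(P)$ and $s(P)$, which can then be inverted using Legendre's relation.

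More concretely, first I would substitute the ansatz
\[
\psi(P;z)=e^{c(P)z}\,\frac{\sigma(z-a_1(P))\,\sigma(z-a_2(P))}{\sigma(z)\sqrt{\sigma(z+p)\,\sigma(z-p)}}
\]
into the ratio $\psi(P;z+\omega_j)/\psi(P;z)$ and apply the transformation law \eqref{transformation law of the Weierstrass sigma function}. Each sigma factor contributes an explicit exponential in $\eta_j$, and the two factors of $-1$ in the numerator of the elliptic piece cancel the $-1$ coming from $\sigma(z+\omega_j)/\sigma(z)$ up to an overall sign that is harmless because the square root already comes with a branch choice fixing the overall sign of $\psi$. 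After cancellation of all $z$-dependent terms (as must happen since the ratio is an eigenvalue), the exponent collapses to
\[
c(P)\,\omega_j-\eta_j(\tau)\,\bigl(a_1(P)+a_2(P)\bigr), \qquad j=1,2.
\]
Equating with $\lambda_1(P)=e^{-2\pi i s(P)}$ and $\lambda_2(P)=e^{2\pi i r(P)}$ yields the pair of equations
\begin{equation*}
c(P)-\eta_1(\tau)\bigl(a_1(P)+a_2(P)\bigr)\equiv -2\pi i\, s(P),\quad
c(P)\tau-\eta_2(\tau)\bigl(a_1(P)+a_2(P)\bigr)\equiv 2\pi i\, r(P),
\end{equation*}
both modulo $2\pi i\mathbb{Z}$.

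To close the argument I would multiply the first equation by $\tau$ and subtract the second, producing $(\eta_1(\tau)\tau-\eta_2(\tau))(a_1(P)+a_2(P))\equiv 2\pi i(r(P)+s(P)\tau)$. Legendre's relation $\eta_1(\tau)\tau-\eta_2(\tau)=2\pi i$ then gives the first identity in \eqref{algebraic relations between a1(P),a2(P),r(P),s(P),c(P)}; substituting back into the first equation and rewriting $\eta_1(\tau)\tau-2\pi i=\eta_2(\tau)$ immediately gives the second. The congruences modulo $\mathbb{Z}+\tau\mathbb{Z}$ that appear during the logarithm step are exactly the ambiguity in the definition of $(r(P),s(P))$ modulo $\mathbb{Z}^2$ and of $a_j(P)\in E_\tau$, so they do not disturb the statement.

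The only genuinely delicate point is bookkeeping of signs and branches: the factor $\sqrt{\sigma(z+p)\sigma(z-p)}$ is a two-valued function, and the elliptic factor $\sigma(z-a_1)\sigma(z-a_2)/\sigma(z)$ contributes three minus signs under $z\mapsto z+\omega_j$. I would handle this by observing that the eigenvalues $\lambda_j(P)$ are intrinsically defined by $\psi$, so any ambiguous sign in the computation of the ratio must match the ambiguity in $\lambda_j(P)$, and hence only the exponents matter for the identities to be proved. Everything else is a transparent application of the transformation law \eqref{transformation law of the Weierstrass sigma function} and Legendre's relation.
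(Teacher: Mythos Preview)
Your proposal is correct and follows essentially the same route as the paper's own proof: compute $\psi(P;z+\omega_j)/\psi(P;z)$ from the sigma-function representation via the transformation law \eqref{transformation law of the Weierstrass sigma function}, match the resulting exponents with the definition \eqref{definition of r(P),s(P)} of $(r(P),s(P))$, and then invert the resulting $2\times 2$ linear system using the Legendre relation. Your discussion of the sign/branch bookkeeping from the square-root factor and of the $\bmod\ \mathbb{Z}^2$ ambiguity is more explicit than what the paper records, but the underlying argument is identical.
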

\begin{proof}
From (\ref{transformation law of the Weierstrass sigma function}), we obtain
\[
\psi(P;z+\omega_{j})=e^{c(P)\omega_{j}-\eta_{j}\left(  a_{1}(P)+a_{2}%
(P)\right)  }\psi(P;z),\quad\text{for }\,j=1,2.
\]
Comparing with \eqref{0907equ8} and \eqref{definition of r(P),s(P)}, we deduce
\begin{equation}%
\begin{cases}
~c(P)-\eta_{1}\left(  a_{1}(P)+a_{2}(P)\right)  =-2\pi i\,s(P),\\
~c(P)\tau-\eta_{2}\left(  a_{1}(P)+a_{2}(P)\right)  =2\pi i\,r(P).
\end{cases}
\label{0807equ1}%
\end{equation}
Therefore, \eqref{algebraic relations between a1(P),a2(P),r(P),s(P),c(P)} follows from the Legendre relation
\begin{equation}
\tau\eta_{1}-\eta_{2}=2\pi i.\label{Legendre relation}%
\end{equation}
\end{proof}

In what follows, we derive algebraic equations satisfied by zeros $a_1(P),a_2(P)$. To this end, define
\begin{equation}
\label{ya}y_{a,c}(z):=e^{cz}\dfrac{\sigma(z-a_{1}) \,\sigma(z-a_{2})}
{\,\sigma(z)\sqrt{\sigma(z+p)\,\sigma(z-p)}\,},
\end{equation}
where $c\in\mathbb{C}$, $a_{1}, a_{2}\in E_{\tau}\setminus\{0\}$. 
\normalsize

\begin{theorem}
\label{a1,a2 alge equ} Let $y_{a,c}(z)$ be defined in \eqref{ya}.
\begin{enumerate}
\item[(i)] Suppose $a_{1}\neq a_{2}\in E_{\tau}\setminus\{0,\pm p\}$.
Then $y_{a,c}(z)$ is a solution to \eqref{GLE1,even}$_{\tau,p,A}$ for some
$A\in\mathbb{C}$, with $B$ given by \eqref{B in even potential}, if and only if
\begin{equation}
\label{a1 and a2's condition}
\dfrac{\wp'(a_1)}{\,\wp(p)-\wp(a_1)\,}+\dfrac{\wp'(a_2)}{\,\wp(p)-\wp(a_2)\,}=0.
\end{equation}
In this case, the constants $c$ and $A$ are determined by
\begin{align}
c  &  =\zeta(a_{1})+\zeta(a_2),\label{c determined by a1, a2}\\
A  &  =\dfrac{\wp'(p)-\wp'(a_1)}{\,2\left(\wp(p)-\wp(a_1)\right)\,}+\dfrac{\wp'(p)-\wp'(a_2)}{\,2\left(\wp(p)-\wp(a_2)\right)\,}-\dfrac{\wp''(p)}{\,4\wp'(p)\,}. \label{A determined by a1, a2}%
\end{align}

\item[(ii)] Suppose $a_{1}=a_{2}\in\{\pm p\}$. Then $y_{a,c}(z)$ is a solution
to \eqref{GLE1,even}$_{\tau,p,A}$ for some $A\in\mathbb{C}$, with $B$ given by
\eqref{B in even potential}, if and only if
\begin{align}
c  &  =\pm2\zeta(p),\label{c determined by a1=a2=pm p}\\
A  &  =\dfrac{\,3\wp^{\prime\prime}(p)\,}{4\wp^{\prime}(p)}.
\label{A determined by a1=a2=pm p}%
\end{align}
\end{enumerate}
\end{theorem}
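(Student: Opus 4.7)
The plan is to recast the ``if and only if'' as the coincidence of two elliptic functions, $y''_{a,c}/y_{a,c}$ and $q(z;A)$, and then to match them pole by pole. A direct logarithmic differentiation gives
\[
\frac{y'_{a,c}(z)}{y_{a,c}(z)} \;=\; c + \zeta(z-a_1) + \zeta(z-a_2) - \zeta(z) - \tfrac{1}{2}\bigl(\zeta(z+p) + \zeta(z-p)\bigr),
\]
and since the total coefficient of the $\zeta$'s is $1+1-1-\tfrac12-\tfrac12 = 0$, the quasi-periods $\eta_j$ cancel under $z \mapsto z+\omega_j$; hence $y'_{a,c}/y_{a,c}$ is doubly periodic, and so is $y''_{a,c}/y_{a,c} = (y'_{a,c}/y_{a,c})' + (y'_{a,c}/y_{a,c})^2$. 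Because $q(z;A)$ is also elliptic, by Liouville's theorem the problem reduces to matching principal parts at the common singular set $\{0,\pm p,a_1,a_2\}$ and then checking the remaining constant.

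For part~(i), the double-pole coefficients at $z=0$ (both equal to $2$) and at $z=\pm p$ (both equal to $3/4$) coincide automatically. The residue condition at $z=0$ yields $c = \zeta(a_1)+\zeta(a_2)$, namely \eqref{c determined by a1, a2}. The residues at $z=\pm p$ give two equations in $A$; their sum, after substituting $c$ and invoking the addition-theoretic identity
\[
\zeta(p-a_i) - \zeta(p+a_i) \;=\; -2\zeta(a_i) + \frac{\wp'(a_i)}{\wp(p)-\wp(a_i)},
\]
collapses to condition \eqref{a1 and a2's condition}; their difference, combined with $\zeta(p+a_i)+\zeta(p-a_i) = 2\zeta(p) + \wp'(p)/(\wp(p)-\wp(a_i))$ and the duplication identity $\zeta(2p)-2\zeta(p) = \wp''(p)/(2\wp'(p))$, delivers formula \eqref{A determined by a1, a2} for $A$. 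The nominal simple-pole residues of $y''_{a,c}/y_{a,c}$ at $z=a_1,a_2$ must also vanish, since $q(z;A)$ is regular there; this is handled via the $\zeta$-addition identity $\zeta(a_1-a_2) = \zeta(a_1)-\zeta(a_2) + (\wp'(a_1)+\wp'(a_2))/(2(\wp(a_1)-\wp(a_2)))$ together with \eqref{a1 and a2's condition}. Part~(ii) follows the same scheme but with the reduced logarithmic derivative $c + \tfrac{3}{2}\zeta(z\mp p) - \zeta(z) - \tfrac{1}{2}\zeta(z\pm p)$: the double-pole coefficient at $z=\pm p$ becomes $\tfrac{9}{4}-\tfrac{3}{2}=\tfrac{3}{4}$, matching $q(z;A)$, and the residue matching at $\pm p$ yields two equations whose unique solution is \eqref{c determined by a1=a2=pm p} and \eqref{A determined by a1=a2=pm p}.

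Once the principal parts are matched, $y''_{a,c}/y_{a,c} - q(z;A)$ is a pole-free elliptic function, hence a constant; evaluating at $z=0$ shows that the vanishing of this constant is exactly the apparent-singularity relation \eqref{B in even potential} for $B$. The main obstacle is the cancellation at $z = a_1, a_2$: the ansatz has simple zeros there, and the would-be simple-pole residues of $y''_{a,c}/y_{a,c}$ must be absorbed under \eqref{a1 and a2's condition} and \eqref{c determined by a1, a2} through a delicate interplay of the addition formulas for $\zeta$ and $\wp'$ applied to the pairs $(p,a_i)$ and $(a_1,-a_2)$. Verifying that the three derived conditions \eqref{c determined by a1, a2}--\eqref{A determined by a1, a2} suffice to remove all spurious poles and simultaneously produce the correct constant term — forced by the apparent-singularity structure encoded in \eqref{B in even potential} — constitutes the most technical step of the proof.
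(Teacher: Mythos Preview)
Your proposal is correct and follows essentially the same approach as the paper: both arguments introduce the elliptic function $g(z)=(y'_{a,c}/y_{a,c})'+(y'_{a,c}/y_{a,c})^2-q(z;A)$ and reduce the equation $y_{a,c}''=q\,y_{a,c}$ to $g\equiv 0$ by matching principal parts at $0,\pm p,a_1,a_2$ and then killing the remaining constant. The only cosmetic differences are that the paper expands $g$ separately at $z=p$ and $z=-p$ (obtaining \eqref{0905equ4} and \eqref{0905equ6} and then comparing) whereas you take the sum and difference of the two residue conditions directly, and the paper reads off the vanishing of the constant from the $O(1)$ term of the expansion at $z=p$ rather than at $z=0$; neither choice affects the substance of the argument.
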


From the expression of $y_{a,c}(z)$ in \eqref{ya}, we have
\begin{align*}
\dfrac{y_{a,c}^{\prime}(z)}{y_{a,c}(z)}  &  =c+\zeta(z-a_{1})+\zeta(z-a_{2}%
)-\zeta(z)-\dfrac{1}{2}\left(  \zeta(z+p)+\zeta(z-p)\right)  ,\\
\left(  \dfrac{y_{a,c}^{\prime}(z)}{y_{a,c}(z)}\right)  ^{\prime}  &
=\wp(z)+\dfrac{1}{2}\left(  \wp(z+p)+\wp(z-p)\right)  -\wp(z-a_{1}
)-\wp(z-a_{2}).
\end{align*}
Define the elliptic function
\begin{equation}\label{0905equ11}
g(z):=\left(  \dfrac{y_{a,c}^{\prime}(z)}{y_{a,c}(z)}\right)  ^{\prime}+\left(
\dfrac{y_{a,c}^{\prime}(z)}{y_{a,c}(z)}\right)^{2}\!-q(z;A).
\end{equation}
Then $y_{a,c}(z)$ solves \eqref{GLE1,even}$_{\tau,p,A}$ if and only if $g(z)\equiv0$.

\begin{proof}[Proof of Theorem \ref{a1,a2 alge equ}]
\textbf{Case (i).} Suppose $a_{1}\neq a_{2}\in E_{\tau}\setminus\{0,\pm p\}$.

The expansion of $g(z)$ at $z=0$ gives
\begin{equation}
\label{0905equ1}%
\begin{split}
g(z)=\dfrac{\,2\left(\zeta(a_1)+\zeta(a_2)-c\right)\,}{z}+O(1),
\end{split}
\end{equation}
so $g$ is holomorphic at $z=0$ if and only if 
\begin{equation}\label{0905equ2}
c=\zeta(a_{1})+\zeta(a_{2}).
\end{equation}
Under \eqref{0905equ2}, the expansion at $z=p$ yields
\begin{equation}\label{0905equ3}
\begin{split}
    g(z)&=\dfrac{1}{z-p}\left(A-\dfrac{\wp'(p)+\wp'(a_1)}{\,2\left(\wp(p)-\wp(a_1)\right)\,}-\dfrac{\wp'(p)+\wp'(a_2)}{\,2\left(\wp(p)-\wp(a_2)\right)\,}+\dfrac{\wp''(p)}{\,4\wp'(p)\,}\right)\\[2pt]
    &-A^2+\left(\dfrac{\wp'(p)+\wp'(a_1)}{\,2\left(\wp(p)-\wp(a_1)\right)\,}+\dfrac{\wp'(p)+\wp'(a_2)}{\,2\left(\wp(p)-\wp(a_2)\right)\,}-\dfrac{\wp''(p)}{\,4\wp'(p)\,}\right)^2\!\!\!+O(z-p).
\end{split}
\end{equation}
Thus $g$ is holomorphic at $z=p$ with $g(p)=0$ if and only if
\begin{equation}\label{0905equ4}
    A=\dfrac{\wp'(p)+\wp'(a_1)}{\,2\left(\wp(p)-\wp(a_1)\right)\,}+\dfrac{\wp'(p)+\wp'(a_2)}{\,2\left(\wp(p)-\wp(a_2)\right)\,}-\dfrac{\wp''(p)}{\,4\wp'(p)\,}.
\end{equation}
Similarly, the expansion at $z=-p$ (still under \eqref{0905equ2}) gives
\begin{equation}\label{0905equ5}
    g(z)=\dfrac{-1}{z+p}\left(A-\dfrac{\wp'(p)-\wp'(a_1)}{\,2\left(\wp(p)-\wp(a_1)\right)\,}-\dfrac{\wp'(p)-\wp'(a_2)}{\,2\left(\wp(p)-\wp(a_2)\right)\,}+\dfrac{\wp''(p)}{\,4\wp'(p)\,}\right)+O(1),
\end{equation}
hence holomorphicity at $z=-p$ requires
\begin{equation}\label{0905equ6}
    A=\dfrac{\wp'(p)-\wp'(a_1)}{\,2\left(\wp(p)-\wp(a_1)\right)\,}+\dfrac{\wp'(p)-\wp'(a_2)}{\,2\left(\wp(p)-\wp(a_2)\right)\,}-\dfrac{\wp''(p)}{\,4\wp'(p)\,}.
\end{equation}
Comparing \eqref{0905equ4} and \eqref{0905equ6} yields
\begin{equation}\label{0905equ7}
    \dfrac{\wp'(a_1)}{\,\wp(p)-\wp(a_1)\,}+\dfrac{\wp'(a_2)}{\,\wp(p)-\wp(a_2)\,}=0.
\end{equation}
The Laurent expansion at $z=a_1$ gives
\begin{equation}\label{0905equ7*}
    g(z)=\dfrac{1}{z-a_1}\left(\dfrac{\wp'(a_1)\left(\wp(p)-\wp(a_2)\right)+\wp'(a_2)\left(\wp(p)-\wp(a_1)\right)}{\left(\wp(p)-\wp(a_1)\right)\left(\wp(a_1)-\wp(a_2)\right)}\right)+O(1).
\end{equation}
Under \eqref{0905equ2}, \eqref{0905equ6}, and \eqref{0905equ7}, the only possible poles of $g$ lie at $a_1,a_2$. By \eqref{0905equ7*}, it follows that $g$ is holomorphic at $a_1$ if and only if
\[
\wp'(a_1)\left(\wp(p)-\wp(a_2)\right)+\wp'(a_2)\left(\wp(p)-\wp(a_1)\right)=0,
\]
which is equivalent to \eqref{0905equ7}. Since $g$ is elliptic and regular at $0,\pm p$, thus vanishing of the residue at $a_1$ forces regularity at $a_2$ as well. In conclusion, $g(z)\equiv0$ if and only if \eqref{0905equ2}, \eqref{0905equ6}, and \eqref{0905equ7} hold. This proves \textit{(i)}.
\vspace{3pt}
\newline\textbf{Case (ii).}
Suppose $a_{1}=a_{2}\in\{\pm p\}$. Since $q(z;A)$ is even, equation \eqref{GLE1,even}$_{\tau,p,A}$ is invariant under $z\mapsto-z$. Hence it suffices to consider $a_1=a_2=p$, and the case $a_1=a_2=-p$ follows by symmetry.

Expanding at $z=0$ gives
\[
g(z)=\dfrac{\,-2\left(  c-2\zeta( p)\right)  \,}{z}+O(1),
\]
hence
\begin{equation}
\label{0905equ9}c=2\zeta(p).
\end{equation}
Expanding further at $z=p$ yields
\begin{equation*}
\begin{split}
    g(z)&=\dfrac{1}{z-p}\left(A-\dfrac{3\wp''(p)}{\,4\wp'(p)\,}\right)
    -A^2+\dfrac{9\wp''(p)^2}{\,16\wp'(p)^2\,}+O(z-p).
\end{split}
\end{equation*}
which forces
\begin{equation}\label{0905equ10}
A=\dfrac{3\wp''(p)}{\,4\wp'(p)\,}.
\end{equation}
Therefore, $y_{a,c}(z)$ is a solution to \eqref{GLE1,even}$_{\tau,p,A}$ iff \eqref{0905equ9} and \eqref{0905equ10} hold.
\end{proof}
	Since this paper focuses on even families of cone spherical metrics blowing up at a conical point $p$, the associated developing map $f(z)$ must have a double zero at $p$ and a double pole at $-p$, corresponding precisely to case (ii) of Theorem \ref{a1,a2 alge equ}. According to Theorem \ref{a1,a2 alge equ}-(ii), the parameter $A$ is given by
		\[
		A=\dfrac{3\wp''(p)}{\,4\wp'(p)\,}.
		\]
		Substituting this into the spectral polynomial $Q(A)$ yields
		\[
		Q\left(\dfrac{3\wp''(p)}{\,4\wp'(p)\,}\right)=-\wp^{\prime}(p)^{2}\neq0,
		\]
		since $p\notin E_{\tau}[2]$. It then follows from Theorem \ref{Q(A) neq 0 iff com.red.} that the generalized Lam\'{e}-type equation \eqref{GLE1,even}$_{\tau,p,A}$ with parameter $A=\frac{3\wp''(p)}{\,4\wp'(p)\,}$ is completely reducible.

	In the following, we set 
	\begin{equation}\label{A0,even}
		A_0:=\dfrac{3\wp''(p)}{\,4\wp'(p)\,},
	\end{equation}
	and denote by 
	\begin{equation}\label{r0,s0,definition}
		(r_0,s_0)\in\mathbb{C}^2\setminus\frac{1}{2}\mathbb{Z}^2
	\end{equation}
	the monodromy data of the generalized Lam\'{e}-type equation \eqref{GLE1,even}$_{\tau,p,A_0}$. We summarize the above discussion as follows.
	
	\begin{proposition}\label{Main Proposition}
		Assume $p\in E_\tau\setminus E_\tau[2]$, and adopt the above notations \eqref{A0,even} and \eqref{r0,s0,definition}. Then the generalized Lam\'{e}-type equation \eqref{GLE1,even}$_{\tau,p,A_0}$ with  is completely reducible. Moreover,
		\begin{enumerate}
			\item[(i)] The Baker-Akhiezer functions $\psi(P;z)$ and $\psi(P^*;z)$ are given explicitly by
			\begin{align}
				\psi(P;z)&=\dfrac{\,e^{2\zeta(p)z}\,\sigma^{\frac{3}{2}}(z-p)\,}{\sigma(z)\,\sigma^{\frac{1}{2}}(z+p)},\label{BA when a1=a2=p, even}\\[3pt]
				\psi(P^*;z)&=\psi(P;-z).\label{BA when a1=a2=-p, even}
			\end{align}
			\item[(ii)] The monodromy data $(r_0,s_0)\in\mathbb{C}^2\setminus\frac{1}{2}\mathbb{Z}^2$ satisfy
			\begin{equation}\label{r0,s0,2p,relation}
				\begin{cases}
					r_0+s_0\tau=2p,\\[2pt]
					r_0\,\eta_1(\tau)+s_0\,\eta_2(\tau)=2\zeta(p).
				\end{cases}
			\end{equation}
		\end{enumerate}
	\end{proposition}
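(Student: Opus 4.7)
The plan is to assemble Proposition~\ref{Main Proposition} from three pieces already in place: the spectral computation of $Q(A_0)$, Theorem~\ref{a1,a2 alge equ}-(ii) (which produces the explicit elliptic-of-second-kind solutions whenever $a_1=a_2\in\{\pm p\}$), and Proposition~\ref{0807prop1} (which converts the zero/exponent data of a Baker--Akhiezer function into the multipliers $(r,s)$).

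First I would record complete reducibility. The paragraph preceding the proposition already evaluates the spectral polynomial at $A_0$ and gives $Q(A_0)=-\wp'(p)^2$. The hypothesis $p\in E_\tau\setminus E_\tau[2]$ forces $\wp'(p)\neq 0$, so $Q(A_0)\neq 0$, and Theorem~\ref{Q(A) neq 0 iff com.red.} yields complete reducibility of \eqref{GLE1,even}$_{\tau,p,A_0}$. By Theorem~\ref{monodromy+spectral main theorem}, this is equivalent to $(r_0,s_0)\notin\tfrac{1}{2}\mathbb{Z}^2$, so the monodromy data in~\eqref{r0,s0,definition} are indeed well-defined in the required set.

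Next I would prove part (i) by specializing Theorem~\ref{a1,a2 alge equ}-(ii) to $a_1=a_2=p$. That theorem gives $c=2\zeta(p)$ and precisely $A=\tfrac{3\wp''(p)}{4\wp'(p)}=A_0$, and asserts that the corresponding $y_{a,c}(z)$ from \eqref{ya} solves \eqref{GLE1,even}$_{\tau,p,A_0}$. Simplifying,
\[
y_{a,c}(z)=e^{2\zeta(p)z}\,\dfrac{\sigma^2(z-p)}{\sigma(z)\sqrt{\sigma(z+p)\sigma(z-p)}}
=\dfrac{e^{2\zeta(p)z}\,\sigma^{3/2}(z-p)}{\sigma(z)\,\sigma^{1/2}(z+p)},
\]
matching the formula claimed for $\psi(P;z)$ in \eqref{BA when a1=a2=p, even}. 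Since $q(z;A_0)$ is even, $y_{a,c}(-z)$ also solves the same ODE; it corresponds to the case $a_1=a_2=-p$, $c=-2\zeta(p)$ of Theorem~\ref{a1,a2 alge equ}-(ii), giving \eqref{BA when a1=a2=-p, even}. To identify these explicit solutions with the Baker--Akhiezer functions $\psi(P;z)$ and $\psi(P^*;z)$, I would argue as follows: by Theorem~\ref{Q(A)neq 0 iff BA are independent} the two Baker--Akhiezer functions are linearly independent (since $Q(A_0)\neq 0$), hence form a basis of elliptic-of-second-kind solutions to \eqref{GLE1,even}$_{\tau,p,A_0}$. Both $y_{a,c}(z)$ and $y_{a,c}(-z)$ lie in this two-dimensional space, are elliptic of the second kind and linearly independent (their multipliers differ, since $c\neq -c$ when $\zeta(p)\neq 0$; the degenerate sub-case is handled by reading the zero-sets $\{p,p\}$ vs $\{-p,-p\}$). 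Matching zeros and multiplier exponents identifies $y_{a,c}(z)$ with $\psi(P;z)$ and $y_{a,c}(-z)$ with $\psi(P^*;z)$ up to the choice of base point absorbed by the normalization in \eqref{0907equ7}, which is exactly how $P$ is labeled on the two sheets of $\Gamma(\tau,p)$ above $A_0$.

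Finally, for (ii), I would plug the data $a_1(P)=a_2(P)=p$ and $c(P)=2\zeta(p)$ into the algebraic relations \eqref{algebraic relations between a1(P),a2(P),r(P),s(P),c(P)} of Proposition~\ref{0807prop1}, which immediately yields
\[
r_0+s_0\tau=2p,\qquad r_0\,\eta_1(\tau)+s_0\,\eta_2(\tau)=2\zeta(p),
\]
as required in \eqref{r0,s0,2p,relation}. The main obstacle I anticipate is purely a bookkeeping one: the correct identification of $y_{a,c}(z)$ with $\psi(P;z)$ rather than with $\psi(P^*;z)$ — that is, fixing the labeling of the two sheets of the spectral curve above $A_0$ and the compatible branch of $\sigma^{1/2}$ making $\psi(P;z)$ single-valued across the two local branches at $\pm p$. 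Once this convention is fixed (for instance by pinning down the sign of $C$ via the residue of $\phi(P;z)$ at $z=p$ using Proposition~\ref{0907prop1}), everything else is a direct citation of the results already established in Section~\ref{Integrability and Spectral Theory for the curvature equation} and the beginning of Section~\ref{The monodromy theory for the ODE}.
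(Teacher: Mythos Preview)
Your proposal is correct and follows essentially the same approach as the paper, which in fact presents Proposition~\ref{Main Proposition} as a direct summary of the preceding discussion (``We summarize the above discussion as follows'') rather than giving a separate proof. Your assembly of the three ingredients---the evaluation $Q(A_0)=-\wp'(p)^2\neq0$ together with Theorem~\ref{Q(A) neq 0 iff com.red.}, the specialization of Theorem~\ref{a1,a2 alge equ}-(ii) to $a_1=a_2=p$, and Proposition~\ref{0807prop1}---is exactly what the paper intends, and your extra care about the sheet labeling of $P$ versus $P^*$ is a reasonable elaboration of a point the paper leaves implicit.
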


\section{The Proofs}
\label{The Proofs}

For any $\left(  r,s\right)  \in\mathbb{C}^{2}\setminus\frac{1}{2}%
\mathbb{Z}^{2}$, we define
\begin{equation}
Z(r,s,\tau):=\zeta(r+s\tau)-r\eta_{1}(\tau)-s\eta_{2}(\tau),\label{zrs}%
\end{equation}
which depends meromorphically on $(r,s)$. Since $(r,$ $s)$ $\not \in $
$\frac{1}{2}\mathbb{Z}^{2}$, it is clear that $Z(r,s,\tau)$ $\not \equiv
0,\infty$ as a function in $\tau$, and it remains meromorphic in $\tau$. This
function was introduced by Hecke in \cite{Heck}, who showed that $Z(r,s,\tau)$
is a modular form of weight one with respect to 
\[
\Gamma(n):=\left\{\gamma\in SL(2,\mathbb{Z})~|~\gamma\equiv I_2\,\operatorname{mod}n\right\}
\]
whenever $(r,$
$s)$ is an $n$-torsion point.

For fixed $\tau$, any point $z\in E_{\tau}$ can be written uniquely as
\[
z=r+s\tau,\quad\left(  r,s\right)  \in\mathbb{R}^{2}\text{ mod
}\mathbb{Z}^{2}\text{.}%
\]
Define the linear map $\eta:E_{\tau}\rightarrow\mathbb{C}$, $z\longmapsto
\eta(z)$ by
\[
\eta(z):=r\eta_{1}(\tau)+s\eta_{2}(\tau)\text{ \,if\, }z=r+s\tau,\quad%
r,s\in\mathbb{R}.
\]
With this notation, the function $Z(r,s,\tau)$ induces a meromorphic function
on $E_{\tau}$, defined by%
\[
Z(z;\tau):=\zeta(z;\tau)-\eta(z),
\]
which satisfies%
\[
Z(z;\tau)=Z(r,s,\tau)\text{, \,where\, }z=r+s\tau,\text{ ~}r,s\in\mathbb{R}^{2}.
\]
The following identity was proved in \cite{LW-AnnMath}:
\[
-4\pi\partial_{z}G(z;\tau)=Z(z;\tau),
\]
Consequently, for each $a\in E_\tau\setminus E_\tau[2]$, we have 
\begin{equation}\label{0910equ1}
\partial_zG(a;\tau)=0~\Longleftrightarrow~ Z(a;\tau)=0.
\end{equation}
Let
\begin{align*}
\Delta_{0}&:=\left\{  \left(  r,s\right)  ~\left\vert~ 0<r,s<\frac{1}{2},~r+s>\frac{1}{2}\right.  \right\}\subset[0,1/2]\times[0,1]\setminus\frac{1}{2}\mathbb{Z}^2,\\[3pt]
F_0&:=\left\{\tau\in\mathbb{H}~\bigg\vert~0\leqslant\operatorname{Re}\tau\leqslant1,~\left|\tau-\dfrac{1}{2}\right|\geqslant\dfrac{1}{2}\right\}.
\end{align*}
As a consequence, $G(z;\tau)$ admits a nontrivial critical point $a\in E_\tau\setminus E_\tau[2]$ for some $\tau\in F_0$ if and only if $(r,s)\in\Delta_0$. Moreover, for each such $(r,s)$, the corresponding $\tau\in F_0$ is uniquely determined \cite[Theorem~1.3]{Chen-Kuo-Lin-Wang-JDG}. 

Let
\[
\Omega:=\left\{  \tau\in F_{0}~\left\vert~ Z(r,s,\tau)=0\text{ for some }\left(r,s\right)  \in\Delta_{0}\right.  \right\}  .
\]
This induces a real-analytic bijection%
\begin{equation}\label{tau map}
\begin{split}
\tau:\Delta_{0}\longrightarrow\Omega,\quad
(r,s)\longmapsto\tau(r,s),\text{\, with\, }Z(r,s,\tau(r,s))=0.
\end{split}
\end{equation}
In the following, denote by $\tau(r,s)\in\Omega$ the unique parameter associated with $(r,s)\in\Delta_{0}$.

The geometry of $\Omega$ is characterized as a triangular region within $F_0$, bounded by three smooth curves $C_i$ for $i=1,2,3$. These curves correspond to tori where the Green function degenerates at $\frac{\omega_i}{2}$; refer to Figure \ref{Omega}.

\begin{theorem}\label{section4, main thm}
	Adopt the notations \eqref{A0,even} and \eqref{r0,s0,definition}. Then the curvature equation \eqref{Curvature 1}$_{\tau,p}$ admits an even family of solutions $\{\mathbf{u}_\beta(z;\tau,p)\}$, blowing up at the cone singularities $\pm p$ as $\beta\to\pm\infty$ if and only if 
    the generalized Lam\'{e}-type equation \eqref{GLE1,even}$_{\tau,p,A_0}$
    has unitary monodromy.
	In this case, writing $p=r+s\tau$ with $(r,s)\in\mathbb{R}^{2}\setminus \tfrac{1}{2}\mathbb{Z}^{2}$, the following hold:
	\begin{enumerate} 
		\item [(i)] $(r,s)\in\Delta_0$, $\tau=\tau(r,s)\in\Omega$. That is, $Z(r,s,\tau)=0$, equivalently, the Green function $G(z;\tau)$ admits a pair of nontrivial critical points at $\pm p$.
		\item[(ii)] The monodromy data of \eqref{GLE1,even}$_{\tau,p,A_0}$ are given by $(2r,2s)\in\mathbb{R}^2\setminus\frac{1}{2}\mathbb{Z}^2$.
        \item [(iii)] The type II developing map can be expressed explicitly as
		    \[
		    f(z)=\frac{\psi(P;z)}{\psi(P^*;z)}=e^{4\zeta(p)z}\,\frac{\sigma(z-p)^2}{\,\sigma(z+p)^2\,}\quad\text{where},
		    \]
        $$P=\left(\frac{3\wp''(p)}{\,4\wp'(p)\,},\,i\wp^{\prime}(p)\right)\in\Gamma(\tau,p).$$
          \item [(iv)] The even family $\{\mathbf{u}_\beta(z;\tau,p)\}$ of equation \eqref{Curvature 1}$_{\tau,p}$, blowing up at $\pm p$ as $\beta\to\pm\infty$, is unique.
	\end{enumerate}
\end{theorem}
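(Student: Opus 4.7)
The plan is to organize the proof around Proposition \ref{Main Proposition} and the case $a_1=a_2=p$ of Theorem \ref{a1,a2 alge equ}, treating the biconditional first and then peeling off (i)--(iv).

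\smallskip

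For the necessity of the biconditional, I would start from the explicit family (\ref{family}) and translate ``blow-up at $\pm p$ as $\beta\to\pm\infty$'' into a local constraint on the developing map $f$. A routine expansion of $u_\beta$ near a point where $f(z)=c(z-p)^n+\cdots$ shows that both the prescribed conical singularity ($2\log|z-p|$) and the required blow-up behaviour (the bounded term diverging to $+\infty$) force $n=2$; symmetrically, a double pole is forced at $-p$. By Theorem \ref{pde to ode} and Proposition \ref{two types of monodromy data}, $f$ has the form $\psi(P;z)/\psi(P^*;z)$ for some $P=(A,C)\in\Gamma(\tau,p)$, and the zeros $a_1(P),a_2(P)$ of $\psi(P;z)$ coincide with the zeros of $f$; so the blow-up condition forces $a_1(P)=a_2(P)=p$, and Theorem \ref{a1,a2 alge equ}(ii) then pins $A=A_0=\frac{3\wp''(p)}{4\wp'(p)}$ and $c(P)=2\zeta(p)$. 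Since $Q(A_0)=-\wp'(p)^2\neq 0$, complete reducibility holds automatically (Theorem \ref{Q(A) neq 0 iff com.red.}), and Theorem \ref{pde to ode} gives unitary monodromy. For the sufficiency, I would run the construction backwards: assuming unitary monodromy for (\ref{GLE1,even})$_{\tau,p,A_0}$, Theorem \ref{pde to ode} and Proposition \ref{Main Proposition}(i) furnish a developing map whose explicit form forces a double zero at $p$ and a double pole at $-p$ (the exponents $3/2$ and $-1/2$ from $\sigma(z\mp p)^{\pm 1/2}$ cancel appropriately in the ratio), yielding the required blow-up family.

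\smallskip

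For (ii) and (i), I would read off the monodromy data directly from Proposition \ref{Main Proposition}(ii): the pair $(r_0,s_0)$ satisfies $r_0+s_0\tau=2p$ and $r_0\eta_1+s_0\eta_2=2\zeta(p)$, a $2\times 2$ linear system whose determinant is nonzero by the Legendre relation $\tau\eta_1-\eta_2=2\pi i$. Writing $p=r+s\tau$ with real $r,s$, unitarity means $(r_0,s_0)\in\mathbb{R}^2$; the first equation $(r_0-2r)+(s_0-2s)\tau=0$ combined with $\mathrm{Im}\,\tau>0$ then forces $(r_0,s_0)=(2r,2s)$, which is (ii). Substituting back into the second equation gives $\zeta(p)=r\eta_1+s\eta_2$, i.e.\ $Z(r,s,\tau)=0$, which by (\ref{0910equ1}) is exactly the condition that $p$ be a nontrivial critical point of $G(z;\tau)$. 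The normalization $(r,s)\in\Delta_0$ and $\tau=\tau(r,s)\in\Omega$ then follows from the classification of nontrivial critical points and the bijection (\ref{tau map}), proving (i).

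\smallskip

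For (iii), I would simply combine the formulas in Proposition \ref{Main Proposition}(i) via $f=\psi(P;z)/\psi(P^*;z)$; the exponentials multiply to $e^{4\zeta(p)z}$, and the sigma factors collapse to $\sigma(z-p)^2/\sigma(z+p)^2$. The identification $C=i\wp'(p)$ is obtained from Theorem \ref{Q(A)'s computation}: substituting $A=A_0$ into $Q(A)$ gives $C^2=-\wp'(p)^2$, and the sign is fixed by the choice of branch in (\ref{phi's def}). For (iv), uniqueness reduces to observing that the blow-up constraint singles out $A=A_0$ (by the necessity argument above), and that the pair of Baker--Akhiezer functions attached to $P=(A_0,i\wp'(p))$ is uniquely determined (up to a global scalar, absorbed by the $\beta$ parameter), so the even family is unique. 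The main obstacle I anticipate is the necessity argument in the first paragraph: matching, in a controlled way, the multivalued local behaviour of $\psi(P;z)$ near $\pm p$ (with fractional exponents $\pm\tfrac{1}{2},\tfrac{3}{2}$) to the prescribed conical singularity plus blow-up asymptotics of $u_\beta$, and thereby ruling out other configurations of $a_1,a_2$ such as $a_1=-a_2\neq \pm p$ that might in principle also yield double zeros/poles of $f$ at $\pm p$ through the ratio structure.
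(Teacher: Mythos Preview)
Your proposal is correct and follows essentially the same route as the paper's proof: both directions of the biconditional are handled via Theorem \ref{pde to ode}, Theorem \ref{a1,a2 alge equ}(ii), and Proposition \ref{Main Proposition}, and parts (i)--(iv) are deduced from the linear system in Proposition \ref{Main Proposition}(ii) exactly as you describe. The concern you flag at the end is harmless: since the local exponents of $\psi(P;z)$ at $\pm p$ are constrained to $\{-\tfrac12,\tfrac32\}$, the order of $f=\psi(P;\cdot)/\psi(P;-\cdot)$ at $p$ lies in $\{-2,0,2\}$, and order $2$ occurs precisely when the exponent is $\tfrac32$ at $p$ and $-\tfrac12$ at $-p$, which forces $a_1=a_2=p$; the paper is equally terse on this point.
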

\newpage

\begin{figure}[htp]
    \centering
    \includegraphics[width=\textwidth]{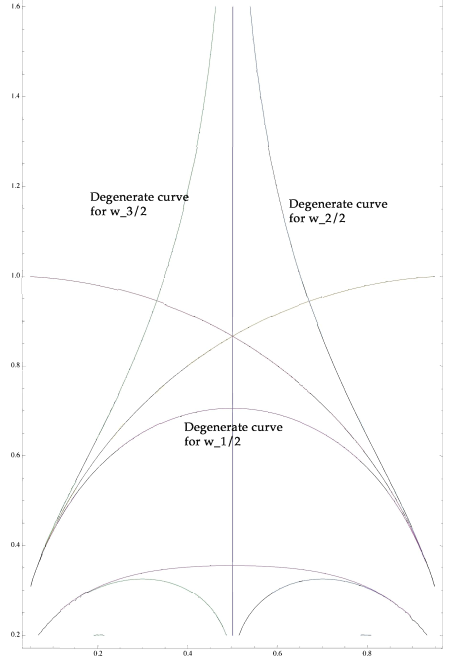}
    \caption{The domain $\Omega$.}
    \label{Omega}
\end{figure}
\newpage

\begin{proof}[Proof of Theorem \ref{section4, main thm}]
Suppose the curvature equation \eqref{Curvature 1}$_{\tau,p}$ admits an even family $\{\mathbf{u}_{\beta}(z;\tau,p)\}$, blowing up at $p$ as $\beta\to\infty$ and at $-p$ as $\beta\to-\infty$. By Theorem \ref{pde to ode}, there exists $A\in\mathbb{C}$ such that the associated equation \eqref{GLE1,even}$_{\tau,p,A}$ has unitary monodromy. Moreover, its type II developing map $f(z)$ is given by \eqref{developing map written as BA/BA*}:
\begin{equation}\label{0911equ1}
    f(z)=\dfrac{\psi(P;z)}{\,\psi(P^*;z)\,}=\dfrac{\psi(P;z)}{\,\psi(P;-z)\,}.
\end{equation}

Since this even family blows up at $\pm p$ as $\beta\to\pm\infty$, we deduce that $p$ must be a zero of $\psi(P;z)$. Consequently, $a_1(P)=a_2(P)=p$. Namely, Theorem \eqref{a1,a2 alge equ}-(ii) applies, which yields
\[
A=A_0=\dfrac{3\wp''(p)}{4\wp'(p)}.
\]
The sufficiency follows from Theorem \ref{pde to ode} and Theorem \ref{a1,a2 alge equ}-(ii) by taking
\begin{equation}\label{0911equ2}
f(z)=\dfrac{\psi(P;z)}{\,\psi(P^*;z)\,}=e^{4\zeta(p)z}\,\dfrac{\sigma^2(z-p)}{\,\sigma^2(z+p)\,}
\end{equation}
and 
\begin{equation}\label{0911equ3}
u(z):=\log\dfrac{8\left|f'(z)\right|^2}{\,\left(1+|f(z)|^2\right)^2\,}.
\end{equation}
Clearly, \textit{(iii)} follows from \eqref{0911equ2}. We now establish \textit{(i)} and \textit{(ii)}. 

By Proposition \ref{Main Proposition}, the monodromy data $(r_0,s_0)\in\mathbb{R}^2\setminus\frac{1}{2}\mathbb{Z}^2$ of \eqref{GLE1,even}$_{\tau,p,A_0}$ satisfy
        \[
        \begin{cases}
            r_0+s_0\tau=2p,\\[2pt]
            r_0\,\eta_1(\tau)+s_0\,\eta_2(\tau)=2\zeta(p).
        \end{cases}
        \]
Since $p=r+s\tau$ with $r,s\in\mathbb{R}$, the first equation gives 
       \[
       r_0=2r\quad\text{and}\quad s_0=2s.
       \]
       Thus, $(2r,2s)$ is the monodromy data of \eqref{GLE1,even}$_{\tau,p,A_0}$. Moreover,
       \begin{align*}
           Z(r,s,\tau)&=\zeta(r+s\tau)-r\,\eta_1(\tau)-s\,\eta_2(\tau)\\&=
           \zeta(p)-\dfrac{1}{2}\left(r_0\,\eta_1(\tau)+s_0\,\eta_2(\tau)\right)\\&=
           \zeta(p)-\dfrac{1}{2}\cdot2\zeta(p)=0.
       \end{align*}
       Finally, for \textit{(iv)}: if $\{\mathbf{u}_{\beta}(z;\tau,p)\}$ is an even family, blowing up at $\pm p$ as $\beta\to\pm\infty$, then the parameter $A$ of generalized Lam\'{e}-type equation \eqref{GLE1,even}$_{\tau,p,A}$ must be $A_0$. Moreover, its type II developing map must be given by \eqref{0911equ2}. Therefore, in view of \eqref{0911equ3}, $\textbf{u}_{\beta}(z;\tau,p)$ must be determined as
       \[
       \textbf{u}_{\beta}(z;\tau,p)=\log\dfrac{8e^{2\beta}\left|f'(z)\right|^2}{\,\left(1+e^{2\beta}|f(z)|^2\right)^2\,},\quad \forall\,\beta\in\mathbb{R}.
       \]
       This establishes the uniqueness of such even families.
	\end{proof}

\begin{proof}[Proof of Theorem \ref{Main theorem1}]
The necessity follows immediately from Theorem \ref{section4, main thm} together with \eqref{0910equ1}.

For sufficiency, suppose that the Green function $G(z;\tau)$ admits a pair of nontrivial critical points $\pm a\in E_\tau\setminus E_\tau[2]$. Writing $a=r+s\tau$ with $(r,s)\in \mathbb{R}^{2}\setminus \tfrac{1}{2}\mathbb{Z}^{2}$, \eqref{0910equ1} gives
\begin{equation}\label{0910equ3}
Z(r,s,\tau)=0.
\end{equation}

Consider the generalized Lam\'e-type equation \eqref{GLE1,even}$_{\tau,a,A_0}$.
By Proposition \ref{Main Proposition}, the monodromy data $(r_0,s_0)\in\mathbb{C}^2\setminus\frac{1}{2}\mathbb{Z}^2$ of equation \eqref{GLE1,even}$_{\tau,a,A_0}$ satisfy:
\begin{equation}\label{0910equ2}
        \begin{cases}
            r_0+s_0\tau=2a,\\[2pt]
            r_0\,\eta_1+s_0\,\eta_2=2\zeta(a).
        \end{cases}
        \end{equation}
Using \(a=r+s\tau\) and \eqref{0910equ3}, the second line of \eqref{0910equ2} yields
\begin{align*}
    r_0\,\eta_1+s_0\,\eta_2&=2\zeta(r+s\tau)\\&=
    2\zeta(r+s\tau)-2Z(r,s,\tau)\\&=2r\eta_1+2s\eta_2.
\end{align*}
We obtain the system
\[
\begin{cases}
            r_0+s_0\tau=2r+2s\tau,\\[2pt]
            r_0\,\eta_1+s_0\,\eta_2=2r\,\eta_1+2s\,\eta_2.
\end{cases}
\]
By the Legendre relation \eqref{Legendre relation}, this implies
\begin{equation}\label{0910equ80}
(r_0,s_0)=(2r,2s)\in\mathbb{R}^2\setminus\frac{1}{2}\mathbb{Z}^2.
\end{equation}
Therefore, by Theorem \ref{section4, main thm}, the curvature equation \eqref{Curvature 1}$_{\tau,a}$ admits an even family of solutions $\{\mathbf{u}_\beta(z;\tau,a)\}$, blowing up at the cone singularities $\pm a$ as $\beta\to\pm\infty$. This completes the proof.
\end{proof}

Theorem \ref{Main theorem2} follows from Proposition \ref{two types of monodromy data} and \eqref{0910equ80}.

\begin{proof}[Proof of Theorem \ref{Main theorem4}]
Suppose the Green function $G(z;\tau)$ has a pair of nontrivial critical points $\pm a$. According to Theorem \ref{section4, main thm}-(ii), the type II developing map $f(z;\tau,a)$ associated to $\{u_{\beta}(z;\tau,a)\}$ is given by
		\begin{equation}
			f(z;\tau,a)=e^{4\zeta(a)z}\frac{\sigma(z-a)^2}{\sigma(z+a)^2}.  \label{type II of u(z)}
		\end{equation}
		It was show in \cite[(3.16)]{LW-AnnMath} that the type II developing map $h(z;\tau)$ associated to $\{v_{\beta}(z;\tau)\}$ is given by
		\[
h(z;\tau)=\exp\left(\int^{z}\frac{\wp^{\prime}(a)}{\wp(\xi)-\wp(a)}d\xi\right),
		\]
		or equivalently,
		\begin{equation}
			h(z;\tau)=e^{2\zeta(a)z}\,\frac{\sigma(z-a)}{\,\sigma(z+a)\,}.  \label{type II of v(z)}
		\end{equation}
		By \eqref{type II of u(z)} and \eqref{type II of v(z)}, we obtain
		\[
		f(z;\tau,a)=h(z;\tau)^2.
		\]

\end{proof}

\begin{proof}[Proof of Theorem \ref{Main theorem3}]
   Assume the Green function 
   $G(z;\tau)$ admits a pair of nontrivial critical points $\pm a$ satisfying \eqref{nontrivial critical point}. Let 
$\left\{\mathfrak{u}_\beta(z;\tau,a)\right\}_{(r,s)}$ be the solution family given by Theorem 3.3 in \cite{KLW}, and let  $\left\{\mathbf{u}_\beta(z;\tau,a)\right\}_{(2r,2s)}$ denote the even family of solutions to equation \eqref{Curvature 1}$_{\tau,a}$, as established in Theorem \ref{Main theorem1}.

For the necessary part,  suppose that for the curvature equation \eqref{curvature special}, the following two families coincide:
		\begin{equation}
		    \left\{\mathbf{u}_\beta(z;\tau,a)\right\}_{(2r,2s)}=\left\{\mathfrak{u}_\beta(z;\tau,a)\right\}_{(r,s)},
            \label{two family  equal}
		\end{equation}
        where  $a$ is a cone point.
        
		Since $\left\{\mathfrak{u}_\beta(z;\tau,a)\right\}_{(r,s)}$ is an even family of equation \eqref{curvature special}, it is known in \cite{Chen-Kuo-Lin-Painleve VI} that $\wp(a)$ can be expressed as
		\begin{equation}
			\wp(a)=\wp(a)+\frac{3\wp^{\prime}(a)Z(a;\tau)+2\wp^{\prime\prime}(a)Z(a;\tau)+3\wp(a)\wp^{\prime}(a)}{2(Z(a;\tau)^3-3\wp(a)Z(a;\tau)-\wp^{\prime}(a))}. \label{Hitch's formula}
		\end{equation}
		From $Z(a;\tau)=0$ and \eqref{Hitch's formula}, we have 
        $$\wp(a)=0.$$ 
        
        Before going on, we recall  Theorem 3.5 in \cite{KLW}: The family $\left\{\mathfrak{u}_\beta(z;\tau,a)\right\}_{(r,s)}$ blows up at the cone point $a$ as $\beta\to +\infty$ if and only if $2a=\pm (r+s\tau)$ $\mod\, \Lambda_{\tau}$.
        
        By \eqref{two family  equal}, $\left\{\mathfrak{u}_\beta(z;\tau,a)\right\}_{(r,s)}$ blows up at $a$ as $\beta\to+\infty$,  we have $2a\equiv\pm a\, \mod \Lambda_{\tau}$,
		which implies 
        $$\wp(2a)=\wp(a)=0.$$
        
        By the addition formula, 
		\[
		0=\wp(2a)=-2\wp(a)+\frac{\wp^{\prime\prime}(a)^2}{4\wp^{\prime}(a)^2}=\frac{\wp^{\prime\prime}(a)^2}{4\wp^{\prime}(a)^2},
		\]
	we have
		\[\wp^{\prime\prime}(a)=0.\]
		Since $\wp(a)=\wp^{\prime\prime}(a)=0$, we obtain
		 \[
		 g_2(\tau)=0,
		 \]
		which implies 
        $$\tau=\rho=e^{\pi i/3}.$$
		Therefore,
\[
(r,s)=\left(\frac{1}{3},\frac{1}{3}\right).
\]
       This proves the necessary part.
        
        For the sufficient part, let $\tau=\rho$ and $a=\frac{1+\rho}{3}$. Recall that 
        \[\wp\left(a;\rho\right)=\wp^{\prime\prime}\left(a;\rho\right)=Z(a;\tau)=0.
        \]
It is easy to verify $\wp(a)$ satifies \eqref{Hitch's formula}. Therefore, the cone point  $a$ satisfies Theorem 3.3-(ii) in \cite{KLW}, which implies the family $\left\{\mathfrak{u}_\beta(z;\tau,a)\right\}_{(\frac{1}{3},\frac{1}{3})}$ is an even family of solutions to the curvature equation
\[
\Delta u+e^u=8\pi\delta_0+4\pi\left(\delta_{\frac{1+\rho}{3}}+\delta_{-\frac{1+\rho}{3}}\right)\quad\text{on }E_{\rho}.
\]

       Moreover,  since
		\[
		2a=\frac{2(1+\rho)}{3}\equiv-\frac{1+\rho}{3}=-a,
		\]
		it follows from Theorem 3.5 in \cite{KLW}, as recalled in above, that $\left\{\mathfrak{u}_\beta(z;\tau,a)\right\}_{\left(\frac{1}{3},\frac{1}{3}\right)}$ blows up at $a=\frac{1+\rho}{3}$ as $\beta\to+\infty$. By the evenness of $\left\{\mathfrak{u}_\beta(z;\tau,a)\right\}_{\left(\frac{1}{3},\frac{1}{3}\right)}$, it blows up at $\pm a$ as $\beta\to\pm\infty$, respectively.
        
        From the uniqueness of such an even family of solutions in Theorem \ref{section4, main thm}, we conclude that
		\[
		\left\{\mathbf{u}_\beta(z;\tau,a)\right\}_{(\frac{2}{3},\frac{2}{3})}=\left\{\mathfrak{u}_\beta(z;\tau,a)\right\}_{(\frac{1}{3},\frac{1}{3})}.
		\]
        This proves the sufficient part.
\end{proof}

\normalsize

\end{document}